\documentclass{amsart}
\usepackage{amsfonts}

\setcounter{MaxMatrixCols}{10}

\newtheorem{theorem}{Theorem}
\theoremstyle{plain}

\newtheorem{corollary}{Corollary}

\newtheorem{definition}{Definition}
\newtheorem{example}{Example}

\newtheorem{lemma}{Lemma}

\newtheorem{remark}{Remark}

\numberwithin{equation}{section}
\input{tcilatex}

\begin{document}
\title[The Relaxed SMP in the Mean-field Singular Controls]{The Relaxed
Stochastic Maximum Principle in the Mean-field Singular Controls}
\author{Liangquan Zhang}
\address{1. School of Mathematics, Shandong University, Jinan 250100,
People's Republic of China. {2. }Laboratoire de Math\'{e}matiques, Universit%
\'{e} de Bretagne Occidentale, 29285 Brest C\'{e}dex, France. }
\email{xiaoquan51011@163.com}
\urladdr{}
\thanks{This work was partially supported by Marie Curie Initial Training
Network (ITN) project: \textquotedblright Deterministic and Stochastic
Controlled System and Application\textquotedblright ,
FP7-PEOPLE-2007-1-1-ITN, No. 213841-2. }
\date{01/11/2012}
\subjclass{93Exx.}
\keywords{Singular control, relaxed control, mean-field SDEs, maximum
principle, adjoint equation, variational inequality.\ }
\dedicatory{}
\thanks{}

\begin{abstract}
In this paper, we study the optimal control system driven by stochastic
differential equations (SDEs) of mean-field type, in which the control
variable has two components, the first being absolutely continuous and the
second singular. On the other hand, the coefficients depend on the state of
the solution process as well as of its expected value. Moreover, the cost
functional is also of mean field type. This makes the control problem time
inconsistent in the sense that the Bellman optimality principle does not
hold. Our aim is to derive a stochastic maximum principle of optimal control
of Pontriagin type to the class of measure-valued controls.
\end{abstract}

\maketitle

\section{Introduction}

Let $v=\left( \Omega ,\mathcal{F},\left( \mathcal{F}_t\right) _{t\geq
0},P,W\right) $ be a reference probability system composed of a completed
probability space $\left( \Omega ,\mathcal{F},P\right) ,$ a filtration $%
\left( \mathcal{F}_t\right) _{t\geq 0}$ satisfying the usual assumptions of
right-continuity and completeness, and a $d$-dimensional $\left( \mathcal{F}%
_t\right) $-Brownian motion $W$ defined on $\left( \Omega ,\mathcal{F}%
,P\right) .$

Consider the following mean-field controlled stochastic differential
equations: 
\begin{equation}
\left\{ 
\begin{array}{lll}
\text{d}X\left( t\right) & = & b\left( t,X\left( t\right) ,\mathbb{E}\left[
X\left( t\right) \right] ,u\left( t\right) \right) \text{d}t \\ 
&  & +\sigma \left( t,X\left( t\right) ,\mathbb{E}\left[ X\left( t\right) %
\right] \right) \text{d}W\left( t\right) +G\left( t\right) \text{d}\eta
\left( t\right) , \\ 
X\left( 0\right) & = & x_{0}\in \mathbb{R}^{n},\quad t\in \left[ 0,T\right] .%
\end{array}%
\right.  \tag{1.1}
\end{equation}%
The coefficients $b,\sigma $ and $G$ will be defined below and $W$ is the
Borwnian motion. For every $t$, the control $u\left( t\right) $ ($\eta
\left( t\right) $) is allowed to take values in some control state space $U$
($\left( \left[ 0,+\infty \right) \right) ^{m}$). This mean-field SDEs is
obtained as the mean-square limit, when $n\rightarrow +\infty $, of a system
of interacting particles 
\begin{eqnarray*}
\text{d}X^{i,n}\left( t\right) &=&b\left( t,X^{i,n}\left( t\right) ,\frac{1}{%
n}\sum_{j=1}^{n}X^{j,n}\left( t\right) ,u\left( t\right) \right) \text{d}t \\
&&+\sigma \left( t,X^{i,n}\left( t\right) ,\frac{1}{n}\sum_{j=1}^{n}X^{j,n}%
\left( t\right) \right) \text{d}W^{i}\left( t\right) +G\left( t\right) \text{%
d}\eta \left( t\right) .
\end{eqnarray*}%
The classical example is the McKean-Vlasov model (see e.g. [26] and the
references therein).

The object of the control problem is to minimize a criteria, over the set $%
U\times \left( \left[ 0,+\infty \right) \right) ^{m}$, has the following form%
\begin{eqnarray*}
&&J\left( \left( u\left( \cdot \right) ,\eta \left( \cdot \right) \right)
\right) \\
&=&\mathbb{E}\left[ \int_{0}^{T}f\left( t,X^{u,\eta }\left( t\right) ,%
\mathbb{E}\left[ X^{u,\eta }\left( t\right) \right] ,u\left( t\right)
\right) \text{d}t\right. \\
&&\left. +h\left( X^{u,\eta }\left( T\right) ,\mathbb{E}\left[ X^{u,\eta
}\left( T\right) \right] \right) +\int_{0}^{T}\varphi \left( t\right) \text{d%
}\eta \left( t\right) \right] .
\end{eqnarray*}

The fundamental work on the stochastic maximum principle was obtained by
Kushner [24]. Since then there have been a lot of literature on this
subject, among them, in particular, those by Bensoussan [4], Bismut [5]
references therein.

The fact that the cost functional $J$ may be nonlinear with respect to the
expectation, makes the control problem time inconsistent in the sense that
Bellman's optimality principle, based on applying the law of iterated
conditional expectations on the cost functional, does not hold. A way to
solve this control problem is to device an extended version of the Dynamic
Programming Principle, as suggested in Ahmed and Ding [1]. The other result
in this direction was obtained independently by Li [27] and Andersson and
Djehiche [3], under the condition that the action space $U$ is convex.
Besides, in Meyer-Brandis, \O sendal, Zhou [28] a stochastic maximum
principle of mean-field type in a similar setting is studied by virtue of
Malliavin calculus. For nonconvex control domain, Buckdahn, et al, in [12]
obtained the Peng's maximum principle with two adjoint equations.

On the other hand, singular control problems have been studied by many
authors including Ben\u{e}s, Shepp, and Witsenhausen [6], Chow, Menaldi, and
Robin [13], Karatzas and Shreve [25] (for more information see references
therein). The approaches used there are mainly based on dynamic programming
principle. It was shown in particular that the value function is a solution
of a variational inequality, and the optimal state is a reflected diffusion
at the free boundary.

As we have known that stochastic maximum principle (SMP in short) is one way
to derive necessary conditions for some optimal controls. The first version
SMP for singular control problems was obtained by Cadenillas and Haussmannn
[14], and developed further by Bahlali et al [8], [9], Andersson, [2].
Recently, the version of stochastic maximum principle for relaxed-singular
controls was established by Bahlali, Djehiche and Mezerdi [8] in the case of
uncontrolled diffusion. In their paper, they first proved a first order
stochastic maximum principle for strict controls by using spike variation of
the absolutely continuous part of the control and a convex perturbation of
the singular part. Then by applying Ekeland's variational principle, they
established necessary conditions for near optimality, satisfied by a
sequence of strict controls converging in some sense to the relaxed optimal
control, by the so called chattering lemma. The relaxed maximum principle is
then derived by using some stability properties of the trajectories and the
adjoint processes with respect to the control variable. For diffusion term
containing control variable see [2]. Note that under the frame work of
mean-field which is time inconsistent in the sense that the Bellman
optimality principle does not hold. Hence, we adopt the approach developed
in [8] to deal with mean-field type.

The rest of this paper is organized as follows. After the statement of the
problem in the second section, we devote the third section to developing the
study the strict-singular control problems. In the last section, we will
establish necessary conditions of optimality for relaxed-singular controls.

\section{Notations and Statement of the Problem}

Let $T>0$ be a fixed time horizon and $\left( \Omega ,\mathcal{F},P\right) $
be a given filtered probability space on which a $d$-dimensional standard
Brownian motion $W=\{W\left( s\right) \},$ $s\geq 0$ is given, and the
filtration $\mathbb{F}=\{\mathcal{F}_{s},0\leq s\leq T\}$ is the natural
filtration of $W$ augmented by $P$-null sets of $\mathcal{F}$.

Let $U_{1}$ be a nonempty compact subset of $\mathbb{R}^{k}$ and $%
U_{2}=\left( \left[ 0,+\infty \right) \right) ^{m}.$ An admissible control $%
u $ is an $\mathbb{F}$-adapted and square-integrable process with values in $%
U_{1}$. We denote the set of all admissible controls by $\mathcal{U}_{1}$.
Besides, we denote $\mathcal{U}_{2}$ as the class of measurable, adapted
processes $\eta $ such that $\eta $ is bounded variation, nondecreasing,
left-continuous with right limits, $\eta \left( 0\right) =0$ and $\mathbb{E}%
\left[ \left\vert \eta \left( T\right) \right\vert ^{2}\right] <+\infty .$

We consider the following stochastic control system:%
\begin{equation}
\left\{ 
\begin{array}{lll}
\text{d}X^{u,\eta }\left( t\right) & = & b\left( t,X^{u,\eta }\left(
t\right) ,\mathbb{E}\left[ X^{u,\eta }\left( t\right) \right] ,u\left(
t\right) \right) \text{d}t \\ 
&  & +\sigma \left( t,X^{u,\eta }\left( t\right) ,\mathbb{E}\left[ X^{u,\eta
}\left( t\right) \right] \right) \text{d}W\left( t\right) +G\left( t\right) 
\text{d}\eta \left( t\right) , \\ 
X^{u,\eta }\left( 0\right) & = & x_{0}\in \mathbb{R}^{n},\quad t\in \left[
0,+\infty \right) ,%
\end{array}%
\right.  \tag{2.0.1}
\end{equation}%
where%
\begin{eqnarray*}
b\left( t,x,y,u\right) &:&\left[ 0,T\right] \times \mathbb{R}^{n}\mathbb{%
\times R}^{n}\mathbb{\times R}^{k}\mathbb{\rightarrow R}^{n}\mathbf{,} \\
\sigma \left( t,x,y\right) &:&\left[ 0,T\right] \times \mathbb{R}^{n}\mathbb{%
\times R}^{n}\mathbb{\rightarrow R}^{n\times d}\mathbf{,} \\
G\left( x\right) &:&\left[ 0,T\right] \rightarrow \mathbb{R}^{n\times m}, \\
t &\in &\left[ 0,T\right] ,\text{ }x,\text{ }y\in \mathbb{R}^{n}\mathbf{,}%
\text{ }u\in U.
\end{eqnarray*}

\subsubsection{Classical Singular Optimal Control Model}

The optimal control problem we are concerned with is to minimize the
following cost functional over $\mathcal{U}_{1}\times \mathcal{U}_{2}$ 
\begin{eqnarray*}
&&J\left( \left( \bar{u}\left( \cdot \right) ,\bar{\eta}\left( \cdot \right)
\right) \right) \\
&=&\mathbb{E}\left[ \int_{0}^{T}f\left( t,X^{\bar{u},\bar{\eta}}\left(
t\right) ,\mathbb{E}\left[ X^{\bar{u},\bar{\eta}}\left( t\right) \right]
,u\left( t\right) \right) \text{d}t\right. \\
&&\left. +h\left( X^{\bar{u},\bar{\eta}}\left( T\right) ,\mathbb{E}\left[ X^{%
\bar{u},\bar{\eta}}\left( T\right) \right] \right) +\int_{0}^{T}\varphi
\left( t\right) \text{d}\eta \left( t\right) \right] ,
\end{eqnarray*}%
\begin{equation}
\tag{2.1.1}
\end{equation}%
where%
\begin{eqnarray*}
f\left( t,x,y,u\right) &:&\left[ 0,T\right] \times \mathbb{R}^{n}\mathbb{%
\times R}^{n}\mathbb{\times R}^{k}\mathbb{\rightarrow R}\mathbf{,} \\
h\left( x,y\right) &:&\mathbb{R}^{n}\mathbb{\times R}^{n}\mathbb{\rightarrow
R}\mathbf{,} \\
\varphi \left( t\right) &:&\left[ 0,T\right] \rightarrow \mathbb{R}^{m}, \\
t &\in &\left[ 0,T\right] ,\text{ }x,\text{ }y\in \mathbb{R}^{n}\mathbf{,}%
\text{ }u\in U.
\end{eqnarray*}%
Any $\left( u\left( \cdot \right) ,\eta \left( \cdot \right) \right) \in 
\mathcal{U}_{1}\times \mathcal{U}_{2}$ satisfying%
\begin{equation}
J\left( \left( u\left( \cdot \right) ,\eta \left( \cdot \right) \right)
\right) =\inf\limits_{\left( \bar{u}\left( \cdot \right) ,\bar{\eta}\left(
\cdot \right) \right) \in \mathcal{U}_{1}\times \mathcal{U}_{2}}J\left( \bar{%
u}\left( \cdot \right) ,\bar{\eta}\left( \cdot \right) \right)  \tag{2.1.2}
\end{equation}%
is called a pair of singular optimal control. The corresponding state
process, solution of (2.0.1), is denoted by $X^{u\left( \cdot \right) ,\eta
\left( \cdot \right) }\left( \cdot \right) .$

We assume that

\begin{enumerate}
\item[\textbf{(H1)}] Assume that functions $b,f,h$ $\sigma $ are
continuously differentiable with respect $\left( x,y\right) $. Moreover,
They and their derivatives are continuous in $\left( x,y,u\right) $ and
bounded uniformly in $u$.

\item[\textbf{(H2)}] $b$ and $\sigma $ are bounded by $C\left( 1+\left\vert
x\right\vert +\left\vert y\right\vert +\left\vert u\right\vert \right) $ and 
$C\left( 1+\left\vert x\right\vert +\left\vert y\right\vert \right) ,$
respectively.

\item[\textbf{(H3)}] $G$ and $k$ are continuous and $G$ is bounded.
\end{enumerate}

Under the above hypothesis, Eq. (2.0.1) has a unique strong solution.

\section{Strictly Singular Optimal Control Problem}

\subsection{The maximum principle for strict controls}

At the beginning let us suppose that $\left( \left( u\left( \cdot \right)
,\eta \left( \cdot \right) \right) \right) $ is an optimal strict control
and denote by $X^{u,\eta }\left( \cdot \right) $ the optimal solution of
(2.0.1). The strict maximum principle will be proved in two steps. The first
variational inequality is derived from the fact 
\begin{equation}
J\left( u^{\varepsilon }\left( \cdot \right) ,\eta \left( \cdot \right)
\right) -J\left( u\left( \cdot \right) ,\eta \left( \cdot \right) \right)
\geq 0  \tag{3.1.1}
\end{equation}%
where $u^{\varepsilon }\left( \cdot \right) $ is a spike variation of the
absolutely continuous part on a small time interval. The second variational
inequity is attained from the inequity 
\begin{equation}
J\left( u\left( \cdot \right) ,\eta ^{\varepsilon }\left( \cdot \right)
\right) -J\left( u\left( \cdot \right) ,\eta \left( \cdot \right) \right)
\geq 0  \tag{3.1.2}
\end{equation}%
where $\eta ^{\varepsilon }\left( \cdot \right) $ is a convex perturbation
of $\eta .$

We consider the first variational inequality. Suppose $X^{u,\eta }\left(
t\right) $ is the solution to our optimal control problem. We introduce the
following spike variational control%
\begin{equation}
u^{\varepsilon }\left( t\right) =\left\{ 
\begin{array}{ll}
v, & \tau \leq t\leq \tau +\varepsilon , \\ 
u\left( t\right) , & \text{otherwise,}%
\end{array}%
\right.  \tag{3.1.3}
\end{equation}%
where $\varepsilon >0$ is sufficiently small, $\tau \in \left[ 0,T\right] .$ 
$v$ is an arbitrary $\mathcal{F}_{\tau }$-measurable random variable with
values in compact $U,$ $0\leq t\leq T,$ and $\sup\limits_{\omega \in \Omega
}\left\vert v\left( \omega \right) \right\vert <+\infty .$ Let $%
X^{u^{\varepsilon },\eta }\left( t\right) $ be the trajectory of the control
system (2.0.1) corresponding to the control $u^{\varepsilon }\left( t\right)
.$

We introduce the following variational equations%
\begin{equation}
\left\{ 
\begin{array}{l}
\text{d}y^{1}\left( t\right) =\left[ b_{x}\left( t,X^{u,\eta }\left(
t\right) ,\mathbb{E}\left[ X^{u,\eta }\left( t\right) \right] ,u\left(
t\right) \right) y^{1}\left( t\right) \right. \\ 
\qquad +b_{\tilde{x}}\left( t,X^{u,\eta }\left( t\right) ,\mathbb{E}\left[
X^{u,\eta }\left( t\right) \right] ,u\left( t\right) \right) \mathbb{E}\left[
y^{1}\left( t\right) \right] \\ 
\qquad \left. +b\left( t,X^{u,\eta }\left( t\right) ,\mathbb{E}\left[
X^{u,\eta }\left( t\right) \right] ,u^{\varepsilon }\left( t\right) \right)
-b\left( t,X^{u,\eta }\left( t\right) ,\mathbb{E}\left[ X^{u,\eta }\left(
t\right) \right] ,u\left( t\right) \right) \right] \text{d}t \\ 
\qquad +\left[ \sigma _{x}\left( t,X^{u,\eta }\left( t\right) ,\mathbb{E}%
\left[ X^{u,\eta }\left( t\right) \right] \right) y^{1}\left( t\right)
\right. \\ 
\qquad \left. +\sigma _{\tilde{x}}\left( t,X^{u,\eta }\left( t\right) ,%
\mathbb{E}\left[ X^{u,\eta }\left( t\right) \right] \right) \mathbb{E}\left[
y^{1}\left( t\right) \right] \right] \text{d}W\left( t\right) , \\ 
y^{1}\left( 0\right) =0.%
\end{array}%
\right.  \tag{3.1.4}
\end{equation}

Owing to (H1)-(H3), it is fairly east to check that (3.1.4) has a unique
solution. The following lemma plays important roles to establish the
inequality.

\begin{lemma}
Assume that (H1)-(H3). Then we have%
\begin{equation*}
\mathbb{E}\left[ \int_{0}^{T}\left\vert y^{1}\left( t\right) \right\vert ^{2}%
\text{d}t\right] \leq o\left( \varepsilon \right) .
\end{equation*}
\end{lemma}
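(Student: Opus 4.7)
The plan is to treat (3.1.4) as a linear SDE (in $y^1$ and its mean) forced by a ``spike'' drift term, and to apply standard $L^2$ moment estimates together with Gr\"onwall's lemma. Set
\[
\Delta b(t) := \bigl[\,b(t,X^{u,\eta}(t),\mathbb{E}[X^{u,\eta}(t)],u^{\varepsilon}(t)) - b(t,X^{u,\eta}(t),\mathbb{E}[X^{u,\eta}(t)],u(t))\,\bigr]\mathbf{1}_{[\tau,\tau+\varepsilon]}(t),
\]
so that $\Delta b$ is $\mathbb{F}$-adapted, supported on an interval of length $\varepsilon$, and uniformly bounded by some constant $C$ thanks to \textbf{(H1)}--\textbf{(H2)} together with the compactness of $U_1$. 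In this notation the variational equation reads
\[
\mathrm{d}y^1(t) = \bigl[b_x\,y^1(t) + b_{\tilde x}\,\mathbb{E}[y^1(t)] + \Delta b(t)\bigr]\mathrm{d}t + \bigl[\sigma_x\,y^1(t) + \sigma_{\tilde x}\,\mathbb{E}[y^1(t)]\bigr]\mathrm{d}W(t),
\]
with $y^1(0)=0$, where the partial derivatives of $b$ and $\sigma$ are evaluated along the optimal trajectory and hence are uniformly bounded by \textbf{(H1)}.

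First I would apply It\^o's formula to $|y^1(t)|^2$ and take expectation, using the Burkholder--Davis--Gundy inequality (or directly the isometry plus Young's inequality) to bound the martingale and quadratic-variation contributions. Using the uniform bound on the derivatives, together with Jensen's inequality $|\mathbb{E}[y^1(s)]|^2\le\mathbb{E}[|y^1(s)|^2]$ to absorb the mean-field coefficient, this gives an estimate of the form
\[
\mathbb{E}\bigl[|y^1(t)|^2\bigr] \le K\int_0^t \mathbb{E}\bigl[|y^1(s)|^2\bigr]\mathrm{d}s + K\,\mathbb{E}\!\left[\left(\int_0^t |\Delta b(s)|\,\mathrm{d}s\right)^{\!2}\right],
\]
for some constant $K$ depending only on $T$ and the bounds in \textbf{(H1)}. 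Since $|\Delta b|\le C$ on an interval of length $\varepsilon$, the forcing term is dominated by $C^2\varepsilon^2$. Gr\"onwall's inequality then yields $\sup_{0\le t\le T}\mathbb{E}[|y^1(t)|^2]\le K'\varepsilon^2$ and integrating in $t$ gives $\mathbb{E}\!\int_0^T|y^1(t)|^2\mathrm{d}t\le K''\varepsilon^2 = o(\varepsilon)$, as required.

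I do not anticipate any genuine obstacle here: the only mildly non-routine point is keeping track of the mean-field term $\mathbb{E}[y^1(t)]$ appearing in both drift and diffusion, but Jensen's inequality reduces it to $\mathbb{E}[|y^1(t)|^2]$ and hence it is absorbed in the Gr\"onwall constant. The key structural feature exploited is that the control $u$ enters only the drift $b$, so the spike variation produces a \emph{drift} perturbation of size $\varepsilon$ rather than the $\sqrt{\varepsilon}$ perturbation one would get from a spike in $\sigma$; squaring then gives the desired $o(\varepsilon)$ bound (in fact $O(\varepsilon^2)$).
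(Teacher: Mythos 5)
The paper omits the proof of this lemma entirely (``The proof is classical. We omit it.''), and your argument is exactly the classical estimate being alluded to: because the spike perturbs only the drift, the forcing enters as a Lebesgue integral over an interval of length $\varepsilon$, and keeping it in the form $\mathbb{E}\bigl[\bigl(\int_0^t|\Delta b(s)|\,\mathrm{d}s\bigr)^2\bigr]\le C\varepsilon^2$ --- rather than degrading it to $\int_0^t\mathbb{E}|\Delta b(s)|^2\,\mathrm{d}s$, which would only give $O(\varepsilon)$ and would \emph{not} suffice --- is precisely the step that yields $O(\varepsilon^2)=o(\varepsilon)$; you identify and execute this correctly. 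One minor caveat: under (H2) the increment $\Delta b(t)$ is dominated by $C\bigl(1+|X^{u,\eta}(t)|+|\mathbb{E}[X^{u,\eta}(t)]|\bigr)$ rather than by a deterministic constant, but since $\mathbb{E}\bigl[\sup_{t\in[0,T]}|X^{u,\eta}(t)|^2\bigr]<+\infty$ the bound $\mathbb{E}\bigl[\bigl(\int_0^t|\Delta b(s)|\,\mathrm{d}s\bigr)^2\bigr]\le C'\varepsilon^2$ survives unchanged.
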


The proof is classical. We omit it. Now let us define the Hamiltonian
associated with random variables $X\in L^{1}\left( \Omega ,\mathcal{F}%
,P\right) $ as follows:%
\begin{equation*}
H\left( t,X,v,p,q\right) :=b\left( t,X,\mathbb{E}\left[ X\right] ,u\right)
p+\sigma \left( t,X,\mathbb{E}\left[ X\right] \right) q+f\left( t,X,\mathbb{E%
}\left[ X\right] ,u\right)
\end{equation*}%
for $\left( p,q\right) \in \mathbb{R}^{n}\mathbb{\times R}^{n\times m},$ and
introduce the adjoint equations involved in the stochastic maximum principle
for our control problem. Note that $\sigma $ does not contain control
variable. So the first order adjoint equation is the following linear
backward SDEs of mean-field type%
\begin{equation}
\left\{ 
\begin{array}{l}
\text{d}p\left( t\right) =-\left[ b_{x}\left( t,X^{u,\eta }\left( t\right) ,%
\mathbb{E}\left[ X^{u,\eta }\left( t\right) \right] ,u\left( t\right)
\right) p\left( t\right) \right. \\ 
\qquad +\mathbb{E}\left[ b_{y}\left( t,X^{u,\eta }\left( t\right) ,\mathbb{E}%
\left[ X^{u,\eta }\left( t\right) \right] ,u\left( t\right) \right) p\left(
t\right) \right] \\ 
\qquad +\sigma _{x}\left( t,X^{u,\eta }\left( t\right) ,\mathbb{E}\left[
X^{u,\eta }\left( t\right) \right] \right) q\left( t\right) \\ 
\qquad +\mathbb{E}\left[ \sigma _{y}\left( t,X^{u,\eta }\left( t\right) ,%
\mathbb{E}\left[ X^{u,\eta }\left( t\right) \right] \right) q\left( t\right) %
\right] \\ 
\qquad -f_{x}\left( t,X^{u,\eta }\left( t\right) ,\mathbb{E}\left[ X^{u,\eta
}\left( t\right) \right] ,u\left( t\right) \right) \\ 
\qquad \left. -\mathbb{E}\left[ f_{y}\left( t,X^{u,\eta }\left( t\right) ,%
\mathbb{E}\left[ X^{u,\eta }\left( t\right) \right] ,u\left( t\right)
\right) \right] \right] \text{d}t+q\left( t\right) \text{d}W\left( t\right) ,
\\ 
p\left( T\right) =h_{x}\left( X^{u,\eta }\left( T\right) ,\mathbb{E}\left[
X^{u,\eta }\left( T\right) \right] \right) \\ 
\qquad +\mathbb{E}\left[ h_{y}\left( X^{u,\eta }\left( T\right) ,\mathbb{E}%
\left[ X^{u,\eta }\left( T\right) \right] \right) \right] .%
\end{array}%
\right.  \tag{3.1.5}
\end{equation}%
Thanks to Theorem 3.1. in Buckdahn, Li and Peng [11], under the assumption
(H1), (3.1.5) admits a unique $\mathcal{F}$-adapted solution $\left( p\left(
\cdot \right) ,q\left( \cdot \right) \right) $ such that 
\begin{equation*}
\mathbb{E}\left[ \sup\limits_{t\in \left[ 0,T\right] }\left\vert p\left(
t\right) \right\vert ^{2}\right] +\mathbb{E}\left[ \int_{0}^{T}\left\vert
q\left( t\right) \right\vert ^{2}\text{d}t\right] <+\infty .
\end{equation*}

\begin{theorem}
Let (H1)-(H3) hold. If $\left( X^{u,\eta }\left( \cdot \right) ,u\left(
\cdot \right) ,\eta \left( \cdot \right) \right) $ is an optimal solution of
(2.0.1), then there exist a pair of $\mathcal{F}$-adapted processes $\left(
p\left( \cdot \right) ,q\left( \cdot \right) \right) $ satisfying (3.1.5)
such that%
\begin{equation}
H\left( t,X^{u,\eta }\left( t\right) ,v,\eta \left( t\right) ,p\left(
t\right) ,q\left( t\right) \right) -H\left( t,X^{u,\eta }\left( t\right)
,u\left( t\right) ,\eta \left( t\right) ,p\left( t\right) ,q\left( t\right)
\right) \geq 0,  \tag{3.1.6}
\end{equation}%
\begin{equation}
P\left\{ \varphi _{i}\left( t\right) +G_{i}\left( t\right) p\left( t\right)
\geq 0\right\} =1,  \tag{3.1.7}
\end{equation}%
\begin{equation}
P\left\{ \sum_{i=1}^{m}\mathbf{I}_{\varphi _{i}\left( t\right) +G_{i}\left(
t\right) p\left( t\right) d\eta _{i}\left( t\right) \geq 0}=0\right\} =1 
\tag{3.1.8}
\end{equation}%
for all $v\in U,$ a.e. $t\in \left[ 0,T\right] ,$ $P$-a.s.
\end{theorem}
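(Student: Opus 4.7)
The plan is to establish (3.1.6) by a spike variation of the absolutely continuous part $u$ and (3.1.7)--(3.1.8) by a convex perturbation of $\eta$, in each case reducing the first-order variation of $J$ to an integral against $(p,q)$ via It\^{o}'s formula and the adjoint BSDE (3.1.5). For the spike variation, starting from (3.1.1) I write $X^{u^{\varepsilon},\eta}=X^{u,\eta}+y^{1}+R^{\varepsilon}$; a standard Gronwall estimate combined with Lemma~1 and (H1) gives $\mathbb{E}[\sup_{t}|R^{\varepsilon}(t)|^{2}]=o(\varepsilon)$. Expanding $f$ and $h$ to first order around the optimal trajectory turns $J(u^{\varepsilon},\eta)-J(u,\eta)$, modulo $o(\varepsilon)$, into the sum of $\mathbb{E}\int_{0}^{T}[f(t,X^{u,\eta},\mathbb{E}X^{u,\eta},u^{\varepsilon})-f(t,X^{u,\eta},\mathbb{E}X^{u,\eta},u)]\,\text{d}t$ and a linear functional of $y^{1}$ in $f_{x},f_{y},h_{x},h_{y}$. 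Applying It\^{o}'s formula to $p(t)y^{1}(t)$ and taking expectation, the drift and terminal terms of (3.1.5) are engineered so that, after using Fubini to rewrite $\mathbb{E}[b_{y}p]\,\mathbb{E}[y^{1}]=\mathbb{E}[\mathbb{E}[b_{y}p]\,y^{1}]$ (and similarly for $\sigma_{y}q$ and $h_{y}$), this linear functional is exchanged for $\mathbb{E}\int_{0}^{T}[b(u^{\varepsilon})-b(u)]p(t)\,\text{d}t$. Dividing by $\varepsilon$ and letting $\varepsilon\downarrow 0$ via the Lebesgue differentiation theorem applied to (3.1.3) then gives (3.1.6) pointwise for every $\mathcal{F}_{\tau}$-measurable $v\in U$.

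\emph{Convex perturbation of $\eta$.} For $\xi\in\mathcal{U}_{2}$ I take $\eta^{\varepsilon}=\eta+\varepsilon(\xi-\eta)$, which again lies in the convex cone $\mathcal{U}_{2}$. The associated first-order variational process $y^{2}$ solves the linear analogue of (3.1.4) with the $b(u^{\varepsilon})-b(u)$ source replaced by the inhomogeneity $G(t)\,\text{d}(\xi-\eta)(t)$. Repeating the duality computation with the \emph{same} adjoint pair $(p,q)$, and retaining the extra $\int_{0}^{T}\varphi(t)\,\text{d}(\eta^{\varepsilon}-\eta)(t)$ term coming from the cost, (3.1.2) yields
\begin{equation*}
\mathbb{E}\int_{0}^{T}\bigl(\varphi(t)+G(t)p(t)\bigr)^{\!\top}\text{d}(\xi-\eta)(t)\geq 0\qquad\text{for every }\xi\in\mathcal{U}_{2}.
\end{equation*}
Choosing $\xi=\eta+\mathbf{1}_{A}e_{i}\mathbf{1}_{[s,T]}$ with $A\in\mathcal{F}_{s}$ arbitrary yields the componentwise sign condition (3.1.7); taking $\xi\equiv 0$ and combining with (3.1.7) then forces the flat-off/complementarity condition (3.1.8).

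\emph{Main difficulty.} The delicate point is the duality step: because (3.1.5) is a mean-field BSDE, It\^{o}'s formula for $p\cdot y^{1}$ produces cross-terms $\mathbb{E}[b_{y}p]\,\mathbb{E}[y^{1}]$, $\mathbb{E}[\sigma_{y}q]\,\mathbb{E}[y^{1}]$ and $\mathbb{E}[h_{y}]\,\mathbb{E}[y^{1}(T)]$ that do not cancel termwise and must be absorbed via Fubini and the duality identity for mean-field BSDEs recalled after (3.1.5). Checking that the very same adjoint pair $(p,q)$ serves both the spike and the convex perturbation, and that the mean-field averaging of the coefficients matches correctly between the variational equation and the BSDE, is where the genuinely mean-field aspect of the problem enters the argument; the rest of the bookkeeping is classical.
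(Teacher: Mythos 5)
Your proposal follows essentially the same route as the paper: a spike variation for (3.1.6) (which the paper simply delegates to [12, Theorem~2.1]), and a convex perturbation of $\eta$ with the variational process $y^{2}$ and It\^{o} duality against the same adjoint pair $(p,q)$ to obtain the integral inequality $\mathbb{E}\int_{0}^{T}(\varphi+Gp)\,\mathrm{d}(\xi-\eta)\geq 0$, which is exactly the paper's (3.1.11); your explicit choices of $\xi$ to extract (3.1.7)--(3.1.8) are the standard ones the paper imports from [8, Theorem~3.7]. The only quibble is the order of the remainder in the spike expansion: to pass to the first-order expansion of $J$ you need $\mathbb{E}[\sup_{t}|R^{\varepsilon}(t)|^{2}]=o(\varepsilon^{2})$ (which the uncontrolled-diffusion Gronwall argument does give), not merely $o(\varepsilon)$ as written.
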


The proof of (3.1.6) can be seen in [12], Theorem 2.1. without control
variable in diffusion term. To prove (3.1.7) and (3.1.8), we need the
following lemmas. At the beginning, we introduce the convex perturbation%
\begin{equation*}
\left( u\left( t\right) ,\eta ^{\alpha }\left( t\right) \right) =\left(
u\left( t\right) ,\eta \left( t\right) +\alpha \left( \xi \left( t\right)
+\eta \left( t\right) \right) \right)
\end{equation*}%
where $\alpha \in \left[ 0,1\right] $ and $\xi \left( \cdot \right) $ is an
arbitrary element of $\mathcal{U}_{2}.$ Suppose that $\left( u\left( \cdot
\right) ,\eta \left( \cdot \right) \right) $ is an optimal control, we will
derive the second variational inequality from the fact that 
\begin{equation*}
J\left( \left( u\left( t\right) ,\eta ^{\alpha }\left( t\right) \right)
\right) -J\left( u\left( t\right) ,\eta \left( t\right) \right) \geq 0.
\end{equation*}

\begin{lemma}
Under the assumptions (H1)-(H3), we have%
\begin{equation*}
\lim\limits_{\alpha \rightarrow 0}\mathbb{E}\left[ \sup\limits_{t\in \left[
0,T\right] }\left\vert X^{u,\eta ^{\alpha }}\left( t\right) -X^{u,\eta
}\left( t\right) \right\vert ^{2}\right] =0.
\end{equation*}
\end{lemma}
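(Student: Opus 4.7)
The plan is to set $\Delta X(t) = X^{u,\eta^{\alpha}}(t) - X^{u,\eta}(t)$ and derive a linear SDE for it by subtracting the two state equations. Since both state processes share the same absolutely continuous control $u$ but differ only in the singular part, $\Delta X$ satisfies an SDE with zero initial condition, a drift given by $b(\cdot, X^{u,\eta^{\alpha}}, \mathbb{E}[X^{u,\eta^{\alpha}}], u) - b(\cdot, X^{u,\eta}, \mathbb{E}[X^{u,\eta}], u)$, an analogous $dW$-term with $\sigma$, and a bounded-variation forcing term $\alpha G(t)\,d(\xi+\eta)(t)$, using the fact that $\eta^{\alpha} - \eta = \alpha(\xi + \eta)$.

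The next step is to exploit that (H1) forces $b$ and $\sigma$ to be globally Lipschitz in $(x,y)$ uniformly in $u,t$, so the drift and diffusion differences are bounded by $C(|\Delta X(s)| + |\mathbb{E}[\Delta X(s)]|)$. Using $(a+b+c)^2 \leq 3(a^2+b^2+c^2)$ after taking $\sup_{s\leq t}$ and expectation, I would control the drift term by Cauchy--Schwarz, the stochastic integral by the Burkholder--Davis--Gundy inequality, and the singular term by boundedness of $G$ (H3) together with the fact that $\xi+\eta$ is nondecreasing with $\mathbb{E}[(\xi(T)+\eta(T))^{2}]<+\infty$, which gives a contribution of size $3\alpha^{2}\,C\,\mathbb{E}[(\xi(T)+\eta(T))^{2}]$. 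Applying Jensen's inequality $|\mathbb{E}[\Delta X(s)]|^{2}\leq \mathbb{E}[|\Delta X(s)|^{2}]\leq \mathbb{E}[\sup_{r\leq s}|\Delta X(r)|^{2}]$ produces a Gronwall-type inequality
$$\Phi(t) := \mathbb{E}\Bigl[\sup_{s\leq t}|\Delta X(s)|^{2}\Bigr] \leq C\int_{0}^{t}\Phi(s)\,ds + C\alpha^{2}.$$
Gronwall's lemma then gives $\Phi(T)\leq C\alpha^{2}e^{CT}\to 0$ as $\alpha\to 0$, which is the claim.

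The main subtlety I expect to encounter is making the BDG step and the splitting rigorous in the presence of jumps: because $\eta$ (and hence $\Delta X$) is only left-continuous with right limits, some care is required with the choice of càglàd integrands when integrating against $d(\xi+\eta)$ and when applying BDG to the martingale part. This is a technical point rather than a conceptual hurdle; the essential mechanism is that the only $\alpha$-dependence in the driving data is the explicit linear factor $\alpha$ in front of the singular forcing, so the estimate is entirely driven by the $O(\alpha)$ size of $\eta^{\alpha}-\eta$, and the mean-field contribution is absorbed harmlessly through Jensen.
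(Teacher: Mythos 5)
Your proposal is correct and follows essentially the same route as the paper: subtract the two state equations, use the Lipschitz bounds from (H1) together with the Burkholder--Davis--Gundy inequality, isolate the singular forcing as an explicit $O(\alpha^{2})$ term controlled by the boundedness of $G$ and the square-integrability of the terminal variation, absorb the mean-field term via Jensen, and conclude by Gronwall. The only (harmless) discrepancy is the sign in $\eta^{\alpha}-\eta$: the paper's displayed definition reads $\alpha(\xi+\eta)$ but its proof (and the later equation (4.2.7)) uses $\alpha(\xi-\eta)$, and your argument works verbatim in either case.
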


\begin{proof}
From standard estimates and the Burkholder-Davis-Gundy inequality we have 
\begin{eqnarray*}
&&\mathbb{E}\left[ \left\vert X^{u,\eta ^{\alpha }}\left( t\right)
-X^{u,\eta }\left( t\right) \right\vert ^{2}\right] \\
&\leq &3T\mathbb{E}\left[ \int_{0}^{t}\left\vert b\left( s,X^{u,\eta
^{\alpha }}\left( s\right) ,\mathbb{E}\left[ X^{u,\eta ^{\alpha }}\left(
s\right) \right] ,u\left( s\right) \right) -b\left( s,X^{u,\eta }\left(
s\right) ,\mathbb{E}\left[ X^{u,\eta }\left( s\right) \right] ,u\left(
s\right) \right) \right\vert ^{2}\text{d}s\right] \\
&&+3\mathbb{E}\left[ \int_{0}^{t}\left\vert \sigma \left( s,X^{u,\eta
^{\alpha }}\left( s\right) ,\mathbb{E}\left[ X^{u,\eta ^{\alpha }}\left(
s\right) \right] \right) -\sigma \left( s,X^{u,\eta }\left( s\right) ,%
\mathbb{E}\left[ X^{u,\eta }\left( s\right) \right] \right) \right\vert ^{2}%
\text{d}s\right] \\
&&+3\alpha ^{2}\mathbb{E}\left[ \int_{0}^{t}\left\vert G\left( s\right)
\left( \xi \left( s\right) -\eta \left( s\right) \right) \right\vert ^{2}%
\text{d}s\right] \\
&\leq &C\mathbb{E}\left[ \int_{0}^{t}\left\vert X^{u,\eta ^{\alpha }}\left(
s\right) -X^{u,\eta }\left( s\right) \right\vert ^{2}\text{d}s\right]
+C\alpha ^{2}\mathbb{E}\left\vert \xi \left( T\right) -\eta \left( T\right)
\right\vert ^{2},\text{ }t\in \left[ 0,T\right] .
\end{eqnarray*}%
where $C$ depends on $T,$ and the Lipschitz coefficients of $b,\sigma .$
From Gronwall's lemma we have the desired result.
\end{proof}

We now introduce the following variational equations of (2.0.1):%
\begin{equation}
\left\{ 
\begin{array}{l}
dy^{2}\left( t\right) =b_{x}\left( t,X^{u,\eta }\left( t\right) ,\mathbb{E}%
\left[ X^{u,\eta }\left( t\right) \right] ,u\left( t\right) \right)
y^{2}\left( t\right) \text{d}t \\ 
\qquad \qquad +b_{y}\left( t,X^{u,\eta }\left( t\right) ,\mathbb{E}\left[
X^{u,\eta }\left( t\right) \right] ,u\left( t\right) \right) q\left( t,\text{%
d}a\right) \mathbb{E}\left[ y^{2}\left( t\right) \right] \text{d}t \\ 
\qquad \qquad +\sigma _{x}\left( t,X^{u,\eta }\left( t\right) ,\mathbb{E}%
\left[ X^{u,\eta }\left( t\right) \right] \right) y^{2}\left( t\right) \text{%
d}W\left( t\right) \\ 
\qquad \qquad +\sigma _{y}\left( t,X^{u,\eta }\left( t\right) ,\mathbb{E}%
\left[ X^{u,\eta }\left( t\right) \right] \right) \mathbb{E}\left[
y^{2}\left( t\right) \right] \text{d}W\left( t\right) \\ 
\qquad \qquad +G\left( t\right) \left( \xi \left( t\right) -\eta \left(
t\right) \right) \text{d}t, \\ 
y^{2}\left( 0\right) =0.%
\end{array}%
\right.  \tag{3.1.9}
\end{equation}%
From (H1)-(H2) it is easy to check that (3.1.9) has a unique strong
solution. Moreover, we have

\begin{lemma}
Under the assumptions (H1)-(H3), we have%
\begin{equation*}
\lim\limits_{\alpha \rightarrow 0}\mathbb{E}\left[ \left\vert \frac{%
X^{u,\eta ^{\alpha }}\left( t\right) -X^{u,\eta }\left( t\right) }{\alpha }%
-y^{2}\left( t\right) \right\vert ^{2}\right] =0,\text{ }t\in \left[ 0,T%
\right] .
\end{equation*}
\end{lemma}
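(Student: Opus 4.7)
The plan is to run a standard variational/differentiability argument: set
\[
Z^{\alpha}(t) := \frac{X^{u,\eta^{\alpha}}(t)-X^{u,\eta}(t)}{\alpha} - y^{2}(t),
\]
derive an SDE satisfied by $Z^{\alpha}$, and show $\mathbb{E}|Z^{\alpha}(t)|^{2}\to 0$ via Gronwall. The crucial observation is that the singular parts cancel exactly: since $\eta^{\alpha}-\eta = \alpha(\xi-\eta)$, dividing the difference of the state equations by $\alpha$ produces the term $\int_{0}^{t} G(s)\,d(\xi-\eta)(s)$, which matches the singular-driven drift of the variational equation (3.1.9). Thus $Z^{\alpha}$ is driven only by Taylor remainders arising from $b$ and $\sigma$.

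Next I would apply the mean-value theorem to the smooth coefficients. Writing
\[
b\bigl(s, X^{u,\eta^{\alpha}},\mathbb{E}[X^{u,\eta^{\alpha}}],u\bigr)-b\bigl(s, X^{u,\eta},\mathbb{E}[X^{u,\eta}],u\bigr) = \widetilde b_{x}^{\alpha}\bigl(X^{u,\eta^{\alpha}}-X^{u,\eta}\bigr) + \widetilde b_{y}^{\alpha}\,\mathbb{E}\bigl[X^{u,\eta^{\alpha}}-X^{u,\eta}\bigr],
\]
where $\widetilde b_{x}^{\alpha}$ and $\widetilde b_{y}^{\alpha}$ are the integrals of $b_{x}$ and $b_{y}$ along the segment joining $(X^{u,\eta},\mathbb{E}[X^{u,\eta}])$ to $(X^{u,\eta^{\alpha}},\mathbb{E}[X^{u,\eta^{\alpha}}])$, and doing the same for $\sigma$, the equation for $Z^{\alpha}$ becomes a linear mean-field SDE with null initial data, whose drift is
\[
b_{x}\cdot Z^{\alpha} + b_{y}\cdot\mathbb{E}[Z^{\alpha}] + R^{b,\alpha}(t),
\]
and whose diffusion is $\sigma_{x}\cdot Z^{\alpha}+\sigma_{y}\cdot\mathbb{E}[Z^{\alpha}]+R^{\sigma,\alpha}(t)$; here the derivatives are evaluated at $(t,X^{u,\eta}(t),\mathbb{E}[X^{u,\eta}(t)],u(t))$ and the remainders have the schematic form
\[
R^{b,\alpha} = (\widetilde b_{x}^{\alpha}-b_{x})\,\frac{X^{u,\eta^{\alpha}}-X^{u,\eta}}{\alpha} + (\widetilde b_{y}^{\alpha}-b_{y})\,\mathbb{E}\!\left[\frac{X^{u,\eta^{\alpha}}-X^{u,\eta}}{\alpha}\right].
\]

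Applying It\^o's formula to $|Z^{\alpha}(t)|^{2}$, together with the Burkholder--Davis--Gundy inequality and the boundedness of $b_{x},b_{y},\sigma_{x},\sigma_{y}$ from (H1), yields
\[
\mathbb{E}|Z^{\alpha}(t)|^{2} \leq C\int_{0}^{t}\mathbb{E}|Z^{\alpha}(s)|^{2}\,ds + C\,\mathbb{E}\int_{0}^{T}\bigl(|R^{b,\alpha}(s)|^{2}+|R^{\sigma,\alpha}(s)|^{2}\bigr)ds,
\]
so Gronwall reduces the lemma to showing the remainders tend to $0$ in $L^{2}(\Omega\times[0,T])$. This is the main obstacle and proceeds in two steps: first, a linear-in-$\alpha$ variant of the estimate in Lemma 2 gives a uniform bound $\sup_{\alpha\in(0,1]}\mathbb{E}[\sup_{t}|(X^{u,\eta^{\alpha}}-X^{u,\eta})/\alpha|^{2}]<+\infty$ (the perturbation enters only through the $O(\alpha)$ singular term); second, Lemma 2 gives $X^{u,\eta^{\alpha}}\to X^{u,\eta}$ in $L^{2}$, and continuity of $b_{x},b_{y}$ in $(x,y)$ together with their uniform boundedness lets dominated convergence conclude $\widetilde b_{x}^{\alpha}\to b_{x}$, $\widetilde b_{y}^{\alpha}\to b_{y}$ in $L^{2}(\Omega\times[0,T])$, and likewise for $\sigma$. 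Combining the $L^{2}$ convergence of the factors $(\widetilde b_{x}^{\alpha}-b_{x})$ etc.\ with the $L^{2}$ boundedness of the divided differences via Cauchy--Schwarz shows $\mathbb{E}\int_{0}^{T}|R^{b,\alpha}|^{2}+|R^{\sigma,\alpha}|^{2}\,ds\to 0$, completing the proof.
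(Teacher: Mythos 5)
Your overall strategy is the same as the paper's: set $Z^{\alpha}=\Sigma$, note the exact cancellation of the singular parts, expand $b$ and $\sigma$ by the mean value theorem, and close with Gronwall after showing the remainder vanishes. The one step that does not go through as written is the very last one. You claim that the $L^{2}$ convergence to zero of $(\widetilde b_{x}^{\alpha}-b_{x})$ combined with the $L^{2}$ boundedness of the divided differences $D^{\alpha}:=(X^{u,\eta^{\alpha}}-X^{u,\eta})/\alpha$ yields, via Cauchy--Schwarz, that $\mathbb{E}\int_{0}^{T}|R^{b,\alpha}|^{2}\,ds\to 0$. Cauchy--Schwarz applied to a product of an $L^{2}$-null sequence and an $L^{2}$-bounded sequence only controls the $L^{1}$ norm of the product; it does not give $L^{2}$ convergence (take $f_{\alpha}=\mathbf{1}_{[0,\alpha]}$, $g_{\alpha}=\alpha^{-1/2}\mathbf{1}_{[0,\alpha]}$). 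But your Gronwall inequality, obtained from It\^o's formula applied to $|Z^{\alpha}|^{2}$, genuinely needs $\mathbb{E}\int_{0}^{T}|R^{b,\alpha}|^{2}+|R^{\sigma,\alpha}|^{2}\,ds\to 0$, because the diffusion remainder enters through its quadratic variation and the drift remainder through $2\int Z^{\alpha}R^{b,\alpha}\leq \int|Z^{\alpha}|^{2}+\int|R^{b,\alpha}|^{2}$. Nor can you rescue the step by dominated convergence using the uniform bound $|\widetilde b_{x}^{\alpha}-b_{x}|\leq 2\|b_{x}\|_{\infty}$: the dominating family $|D^{\alpha}|^{2}$ changes with $\alpha$ and is only uniformly bounded in $L^{1}(\Omega\times[0,T])$, not uniformly integrable; a fourth-moment bound on $D^{\alpha}$ would require $\mathbb{E}|\eta(T)|^{4}<\infty$, which is not among the hypotheses.

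The repair is exactly the decomposition the paper uses. Write $D^{\alpha}=Z^{\alpha}+y^{2}$ inside the mean-value expansion, so that the terms $\widetilde b_{x}^{\alpha}Z^{\alpha}$, $\widetilde\sigma_{x}^{\alpha}Z^{\alpha}$ (with uniformly bounded coefficients) are absorbed into the Gronwall drift, and the remainder reduces to $(\widetilde b_{x}^{\alpha}-b_{x})\,y^{2}$ and its analogues, which involve only the \emph{fixed} square-integrable process $y^{2}$. Since $|\widetilde b_{x}^{\alpha}-b_{x}|$ is uniformly bounded and tends to $0$ a.e.\ (along subsequences, by Lemma 2 and the continuity of $b_{x},b_{y},\sigma_{x},\sigma_{y}$), dominated convergence with dominating function $C|y^{2}|^{2}$ gives the needed $L^{2}(\Omega\times[0,T])$ convergence of the remainders. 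This is precisely the structure of the term $\kappa^{\varrho}$ in the paper's proof, whose convergence to zero the paper asserts; your write-up is otherwise more explicit than the paper's about the singular-part cancellation and the uniform bound on $D^{\alpha}$.
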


\begin{proof}
We have%
\begin{eqnarray*}
&&\frac{X^{u,\eta ^{\alpha }}\left( t\right) -X^{u,\eta }\left( t\right) }{%
\alpha }-y^{2}\left( t\right) \\
&=&\frac{1}{\alpha }\int_{0}^{t}b\left( s,X^{u,\eta ^{\alpha }}\left(
s\right) ,\mathbb{E}\left[ X^{u,\eta ^{\alpha }}\left( s\right) \right]
,u\left( s\right) \right) \text{d}s \\
&&-\frac{1}{\alpha }\int_{0}^{t}b\left( s,X^{u,\eta }\left( s\right) ,%
\mathbb{E}\left[ X^{u,\eta ^{\alpha }}\left( s\right) \right] ,u\left(
s\right) \right) \text{d}s \\
&&+\frac{1}{\alpha }\int_{0}^{t}b\left( s,X^{u,\eta }\left( s\right) ,%
\mathbb{E}\left[ X^{u,\eta ^{\alpha }}\left( s\right) \right] ,u\left(
s\right) \right) \text{d}s \\
&&-\frac{1}{\alpha }\int_{0}^{t}b\left( s,X^{u,\eta }\left( s\right) ,%
\mathbb{E}\left[ X^{u,\eta }\left( s\right) \right] ,u\left( s\right)
\right) \text{d}s \\
&&+\frac{1}{\alpha }\int_{0}^{t}\sigma \left( s,X^{u,\eta ^{\alpha }}\left(
s\right) ,\mathbb{E}\left[ X^{u,\eta ^{\alpha }}\left( s\right) \right]
,u\left( s\right) \right) \text{d}W\left( s\right) \\
&&-\frac{1}{\alpha }\int_{0}^{t}\sigma \left( s,X^{u,\eta }\left( s\right) ,%
\mathbb{E}\left[ X^{u,\eta ^{\alpha }}\left( s\right) \right] ,u\left(
s\right) \right) \text{d}W\left( s\right) \\
&&+\frac{1}{\alpha }\int_{0}^{t}\sigma \left( s,X^{u,\eta }\left( s\right) ,%
\mathbb{E}\left[ X^{u,\eta ^{\alpha }}\left( s\right) \right] ,u\left(
s\right) \right) \text{d}W\left( s\right) \\
&&-\frac{1}{\alpha }\int_{0}^{t}\sigma \left( s,X^{u,\eta ^{\alpha }}\left(
s\right) ,\mathbb{E}\left[ X^{u,\eta ^{\alpha }}\left( s\right) \right]
,u\left( s\right) \right) \text{d}W\left( s\right) \\
&&-\int_{0}^{t}b_{x}\left( t,X^{u,\eta }\left( t\right) ,\mathbb{E}\left[
X^{u,\eta }\left( t\right) \right] ,u\left( s\right) \right) y^{2}\left(
t\right) \text{d}t \\
&&-\int_{0}^{t}b_{y}\left( t,X^{u,\eta }\left( t\right) ,\mathbb{E}\left[
X^{u,\eta }\left( t\right) \right] ,u\left( s\right) \right) \mathbb{E}\left[
y^{2}\left( t\right) \right] \text{d}t \\
&&-\int_{0}^{t}\sigma _{x}\left( t,X^{u,\eta }\left( t\right) ,\mathbb{E}%
\left[ X^{u,\eta }\left( t\right) \right] \right) y^{2}\left( t\right) \text{%
d}W\left( t\right) \\
&&-\int_{0}^{t}\sigma _{y}\left( t,X^{u,\eta }\left( t\right) ,\mathbb{E}%
\left[ X^{u,\eta }\left( t\right) \right] \right) \mathbb{E}\left[
y^{2}\left( t\right) \right] \text{d}W\left( t\right) .
\end{eqnarray*}%
Set $\Sigma \left( t\right) =\frac{X^{u,\eta ^{\alpha }}\left( t\right)
-X^{u,\eta }\left( t\right) }{\alpha }-y^{2}\left( t\right) ,$ $t\in \left[
0,T\right] .$ Taking the expectation, we have%
\begin{eqnarray*}
&&\mathbb{E}\left[ \left\vert \Sigma \left( t\right) \right\vert ^{2}\right]
\\
&=&C\mathbb{E}\left[ \int_{0}^{t}\int_{0}^{1}\left\vert \bar{b}_{x}\left(
s\right) \Sigma \left( s\right) \right\vert ^{2}\text{d}s\text{d}\theta %
\right] \\
&&+C\mathbb{E}\left[ \int_{0}^{t}\int_{0}^{1}\left\vert \bar{b}_{y}\left(
s\right) \mathbb{E}\left[ \Sigma \left( s\right) \right] \right\vert ^{2}%
\text{d}s\text{d}\theta \right] \\
&&+C\mathbb{E}\left[ \int_{0}^{t}\int_{0}^{1}\left\vert \bar{\sigma}%
_{x}\left( s\right) \Sigma \left( s\right) \right\vert ^{2}\text{d}W\left(
s\right) \text{d}\theta \right] \\
&&+C\mathbb{E}\left[ \int_{0}^{t}\int_{0}^{1}\left\vert \bar{\sigma}%
_{y}\left( s\right) \mathbb{E}\left[ \Sigma \left( s\right) \right]
\right\vert ^{2}\text{d}W\left( s\right) \text{d}\theta \right] \\
&&+C\mathbb{E}\left[ \left\vert \kappa ^{\varrho }\left( t\right)
\right\vert ^{2}\right] ,
\end{eqnarray*}%
where%
\begin{equation*}
\left\{ 
\begin{array}{l}
\bar{b}_{x}\left( s\right) =b_{x}\left( s,X^{u,\eta }\left( s\right) +\theta
\varrho \left( \Sigma \left( s\right) -y^{2}\left( s\right) \right) ,\mathbb{%
E}\left[ X^{u,\eta ^{\varrho }}\left( s\right) \right] ,a\right) , \\ 
\bar{b}_{y}\left( s\right) =b_{y}\left( s,X^{u,\eta }\left( s\right) ,%
\mathbb{E}\left[ X^{u,\eta }\left( s\right) \right] +\theta \varrho \left( 
\mathbb{E}\left[ \Sigma \left( s\right) -y^{2}\left( s\right) \right]
\right) ,a\right) , \\ 
\bar{\sigma}_{x}\left( s\right) =\sigma _{x}\left( s,X^{u,\eta }\left(
s\right) +\theta \varrho \left( \Sigma \left( s\right) -y^{2}\left( s\right)
\right) ,\mathbb{E}\left[ X^{u,\eta ^{\varrho }}\left( s\right) \right]
\right) , \\ 
\bar{\sigma}_{y}\left( s\right) =\sigma _{y}\left( s,X^{u,\eta }\left(
s\right) ,\mathbb{E}\left[ X^{u,\eta }\left( s\right) \right] +\theta
\varrho \left( \mathbb{E}\left[ \Sigma \left( s\right) -y^{2}\left( s\right) %
\right] \right) \right) ,%
\end{array}%
\right.
\end{equation*}%
and%
\begin{eqnarray*}
&&\left\vert \kappa ^{\varrho }\left( t\right) \right\vert ^{2} \\
&=&\int_{0}^{t}\int_{0}^{1}\bar{b}_{x}\left( s\right) y^{2}\left( s\right) 
\text{d}s\text{d}\theta \\
&&+\int_{0}^{t}\int_{0}^{1}\bar{b}_{y}\left( s\right) \mathbb{E}\left[
y^{2}\left( s\right) \right] \text{d}s\text{d}\theta \\
&&+\int_{0}^{t}\int_{0}^{1}\bar{\sigma}_{x}\left( s\right) y^{2}\left(
s\right) \text{d}W\left( s\right) \text{d}\theta \\
&&+\int_{0}^{t}\int_{0}^{1}\bar{\sigma}_{y}\left( s\right) \mathbb{E}\left[
y^{1}\left( s\right) \right] \text{d}W\left( s\right) \text{d}\theta \\
&&-\int_{0}^{t}b_{x}\left( s,X^{q,\eta }\left( s\right) ,\mathbb{E}\left[
X^{q,\eta }\left( s\right) \right] ,u\left( s\right) \right) y^{2}\left(
s\right) \text{ds} \\
&&-\int_{0}^{t}b_{y}\left( s,X^{q,\eta }\left( s\right) ,\mathbb{E}\left[
X^{q,\eta }\left( s\right) \right] ,a\right) \mathbb{E}\left[ y^{2}\left(
s\right) \right] \text{d}s \\
&&-\int_{0}^{t}\sigma _{x}\left( s,X^{q,\eta }\left( s\right) ,\mathbb{E}%
\left[ X^{q,\eta }\left( s\right) \right] \right) y^{2}\left( s\right) \text{%
d}W\left( s\right) \\
&&-\int_{0}^{t}\sigma _{y}\left( s,X^{q,\eta }\left( s\right) ,\mathbb{E}%
\left[ X^{q,\eta }\left( s\right) \right] \right) \mathbb{E}\left[
y^{2}\left( s\right) \right] \text{d}W\left( s\right) .
\end{eqnarray*}%
By (H1), we get%
\begin{equation*}
\mathbb{E}\left[ \left\vert \Sigma \left( t\right) \right\vert ^{2}\right]
\leq C\mathbb{E}\left[ \int_{0}^{t}\left\vert \Sigma \left( s\right)
\right\vert ^{2}\text{d}s\right] +C\mathbb{E}\left[ \left\vert \kappa
^{\varrho }\left( t\right) \right\vert ^{2}\right] .
\end{equation*}%
Noting that 
\begin{equation*}
\lim\limits_{\varrho \rightarrow 0}\mathbb{E}\left[ \left\vert \kappa
^{\varrho }\left( t\right) \right\vert ^{2}\right] =0.
\end{equation*}%
By Gronwall's lemma, we get the desired result.
\end{proof}

Now we give the variational inequality.

\begin{lemma}
Assume that (H1)-(H3) hold. Then we have%
\begin{eqnarray*}
0 &\leq &\mathbb{E}\left[ h_{x}\left( X^{u,\eta }\left( T\right) ,\mathbb{E}%
\left[ X^{u,\eta }\left( T\right) \right] \right) y^{2}\left( T\right)
+h_{y}\left( X^{u,\eta }\left( T\right) ,\mathbb{E}\left[ X^{u,\eta }\left(
T\right) \right] \right) \mathbb{E}\left[ y^{2}\left( T\right) \right] %
\right] \\
&&+\mathbb{E}\left[ \int_{0}^{T}f_{x}\left( t,X^{u,\eta }\left( t\right) ,%
\mathbb{E}\left[ X^{u,\eta }\left( t\right) \right] ,u\left( t\right)
\right) y^{2}\left( t\right) \text{d}t\right] \\
&&+\mathbb{E}\left[ \int_{0}^{T}f_{y}\left( t,X^{u,\eta }\left( t\right) ,%
\mathbb{E}\left[ X^{u,\eta }\left( t\right) \right] ,u\left( t\right)
\right) \mathbb{E}\left[ y^{2}\left( t\right) \right] \text{d}t\right] \\
&&+\mathbb{E}\left[ \int_{0}^{T}\varphi \left( t\right) \text{d}\left( \xi
\left( t\right) -\eta \left( t\right) \right) \right] .
\end{eqnarray*}%
\begin{equation}
\tag{3.1.10}
\end{equation}
\end{lemma}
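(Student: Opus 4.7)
The plan is to compute $\frac{1}{\alpha}\bigl(J(u,\eta^{\alpha})-J(u,\eta)\bigr)$, pass to the limit $\alpha\downarrow 0$, and use the optimality inequality $J(u,\eta^{\alpha})-J(u,\eta)\geq 0$ together with Lemmas 2 and 3.

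First I would split the cost difference into three natural pieces: the running cost $\int_{0}^{T}f$-difference, the terminal cost $h$-difference, and the singular cost $\int_{0}^{T}\varphi(t)\,\mathrm{d}(\eta^{\alpha}-\eta)(t)$. Since $\eta^{\alpha}-\eta=\alpha(\xi-\eta)$ by construction, the singular piece is exactly $\alpha\int_{0}^{T}\varphi(t)\,\mathrm{d}(\xi(t)-\eta(t))$, producing the last term of (3.1.10) without any approximation. For the $f$ and $h$ pieces I would apply a first-order Taylor expansion along the segment joining $(X^{u,\eta}(t),\mathbb{E}[X^{u,\eta}(t)])$ to $(X^{u,\eta^{\alpha}}(t),\mathbb{E}[X^{u,\eta^{\alpha}}(t)])$, writing for instance
\begin{equation*}
f(t,X^{u,\eta^{\alpha}},\mathbb{E}X^{u,\eta^{\alpha}},u)-f(t,X^{u,\eta},\mathbb{E}X^{u,\eta},u) = \tilde{f}_x^{\alpha}(t)\bigl(X^{u,\eta^{\alpha}}-X^{u,\eta}\bigr) + \tilde{f}_y^{\alpha}(t)\,\mathbb{E}\bigl[X^{u,\eta^{\alpha}}-X^{u,\eta}\bigr],
\end{equation*}
where $\tilde{f}_x^{\alpha}$, $\tilde{f}_y^{\alpha}$ are the corresponding integrated partial derivatives along the segment, and analogously for $h$ at $t=T$.

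Dividing everything by $\alpha>0$, the difference quotient $\alpha^{-1}(X^{u,\eta^{\alpha}}-X^{u,\eta})$ converges in $L^{2}(\mathrm{d}t\otimes\mathrm{d}P)$ to $y^{2}$ by Lemma 3, and Lemma 2 combined with the continuity and uniform boundedness of $f_x,f_y,h_x,h_y$ from (H1) ensures $\tilde{f}_x^{\alpha}(t)\to f_x(t,X^{u,\eta}(t),\mathbb{E}[X^{u,\eta}(t)],u(t))$ in $L^{2}$, and likewise for $\tilde{f}_y^{\alpha}$, $\tilde{h}_x^{\alpha}$, $\tilde{h}_y^{\alpha}$. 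Cauchy--Schwarz then yields convergence of the products, and dominated convergence lets me exchange limit, expectation and $\mathrm{d}t$-integration, producing the $f_x y^{2}$, $f_y\mathbb{E}y^{2}$, $h_x y^{2}(T)$ and $h_y\mathbb{E}y^{2}(T)$ terms of (3.1.10). Since $\alpha^{-1}(J(u,\eta^{\alpha})-J(u,\eta))\geq 0$ for every $\alpha>0$, the limit is $\geq 0$, giving (3.1.10).

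The main obstacle will be the uniform integrability needed when passing to the limit in the Taylor remainders: one must check that $\tilde{f}_x^{\alpha}(t)\,\alpha^{-1}(X^{u,\eta^{\alpha}}(t)-X^{u,\eta}(t))$ is dominated in $L^{1}$ independently of $\alpha$. This follows because (H1) provides a uniform bound on $\tilde{f}_x^{\alpha}$, while Lemma 3 gives a uniform $L^{2}$ bound on the difference quotient (since $y^{2}$ is fixed and $\alpha^{-1}(X^{u,\eta^{\alpha}}-X^{u,\eta})\to y^{2}$ in $L^{2}$). An entirely parallel argument handles the terminal term using continuity of $h_x,h_y$ and the $t=T$ version of the convergence in Lemma 3.
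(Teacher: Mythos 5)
Your proposal is correct and follows essentially the same route as the paper: split the cost increment into the $f$, $h$, and singular pieces, note the singular piece is exactly linear in $\alpha$, Taylor-expand $f$ and $h$ along the segment between the two trajectories, pass to the limit using Lemma 2 (state convergence) and Lemma 3 (convergence of the difference quotient to $y^{2}$), and conclude from $J(u,\eta^{\alpha})-J(u,\eta)\geq 0$. Your write-up is in fact more careful than the paper's (which is very terse and leaves the uniform-integrability and dominated-convergence steps implicit), but it is the same argument.
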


\begin{proof}
From Lemma 3, we have%
\begin{eqnarray*}
&&\lim\limits_{\alpha \rightarrow 0}\mathbb{E}\left[ \int_{0}^{T}f\left(
t,X^{u,\eta ^{\alpha }}\left( t\right) ,\mathbb{E}\left[ X^{u,\eta ^{\alpha
}}\left( t\right) \right] ,u\left( t\right) \right) y^{2}\left( t\right) 
\text{d}t\right] \\
&&-\mathbb{E}\left[ \int_{0}^{T}f\left( t,X^{u,\eta }\left( t\right) ,%
\mathbb{E}\left[ X^{u,\eta }\left( t\right) \right] ,u\left( t\right)
\right) y^{2}\left( t\right) \text{d}t\right] \\
&=&\lim\limits_{\alpha \rightarrow 0}\mathbb{E}\left[ \int_{0}^{T}f\left(
t,X^{u,\eta ^{\alpha }}\left( t\right) ,\mathbb{E}\left[ X^{u,\eta ^{\alpha
}}\left( t\right) \right] ,u\left( t\right) \right) y^{2}\left( t\right) 
\text{d}t\right] \\
&&-\mathbb{E}\left[ \int_{0}^{T}f\left( t,X^{u,\eta }\left( t\right) ,%
\mathbb{E}\left[ X^{u,\eta ^{\alpha }}\left( t\right) \right] ,u\left(
t\right) \right) y^{2}\left( t\right) \text{d}t\right] \\
&&+\mathbb{E}\left[ \int_{0}^{T}f\left( t,X^{u,\eta }\left( t\right) ,%
\mathbb{E}\left[ X^{u,\eta ^{\alpha }}\left( t\right) \right] ,u\left(
t\right) \right) y^{2}\left( t\right) \text{d}t\right] \\
&&-\mathbb{E}\left[ \int_{0}^{T}f\left( t,X^{q,\eta }\left( t\right) ,%
\mathbb{E}\left[ X^{q,\eta }\left( t\right) \right] ,u\left( t\right)
\right) y^{2}\left( t\right) \text{d}t\right] \\
&=&\mathbb{E}\left[ \int_{0}^{T}f_{x}\left( t,X^{q,\eta }\left( t\right) ,%
\mathbb{E}\left[ X^{q,\eta }\left( t\right) \right] ,a\right) y^{2}\left(
t\right) \text{d}t\right] \\
&&+\mathbb{E}\left[ \int_{0}^{T}f_{y}\left( t,X^{q,\eta }\left( t\right) ,%
\mathbb{E}\left[ X^{q,\eta }\left( t\right) \right] ,a\right) \mathbb{E}%
\left[ y^{2}\left( t\right) \right] \text{d}t\right] .
\end{eqnarray*}%
The same method to deal with $h,$ from the fact that 
\begin{equation*}
\frac{J\left( \left( q^{\varrho }\left( \cdot \right) ,\eta ^{\varrho
}\left( \cdot \right) \right) \right) -J\left( q\left( \cdot \right) ,\eta
\left( \cdot \right) \right) }{\varrho }\geq 0.
\end{equation*}%
We get the desired result.
\end{proof}

\begin{lemma}
Let $\left( u,\eta \right) $ be a pair of optimal control and let $X^{u,\eta
}\left( \cdot \right) $ be the corresponding trajectory. Then we have 
\begin{equation}
0\leq \mathbb{E}\left[ \int_{0}^{T}\left( \varphi \left( t\right) +p\left(
t\right) G\left( t\right) \right) \text{d}\left( \xi \left( t\right) -\eta
\left( t\right) \right) \right] .  \tag{3.1.11}
\end{equation}
\end{lemma}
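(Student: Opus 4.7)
My plan is to derive (3.1.11) from the variational inequality of Lemma 5 by a duality argument that trades the variational process $y^{2}$ for the adjoint pair $(p,q)$. The key device is Itô's formula applied to the product $p(t)y^{2}(t)$ on $[0,T]$. Since the adjoint $p$ in (3.1.5) is a continuous semimartingale (its finite-variation part is absolutely continuous in $t$ and its martingale part is $q\,\mathrm{d}W$), whereas $y^{2}$ inherits the bounded-variation jumps of the singular integrator $\xi-\eta$, the cross-jump contribution $\sum\Delta p\,\Delta y^{2}$ vanishes, and integration by parts reduces to
\begin{equation*}
p(T)y^{2}(T)=\int_{0}^{T}\!p\,\mathrm{d}y^{2}+\int_{0}^{T}\!y^{2}\,\mathrm{d}p+\int_{0}^{T}\!q(t)\bigl(\sigma_{x}y^{2}(t)+\sigma_{y}\mathbb{E}[y^{2}(t)]\bigr)\mathrm{d}t.
\end{equation*}

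I would then substitute the explicit dynamics of $y^{2}$ from (3.1.9) and of $p$ from (3.1.5), and take expectations to kill the Brownian stochastic integrals. Three sets of cancellations occur in sequence: the $b_{x}\,p\,y^{2}$ terms cancel between $p\,\mathrm{d}y^{2}$ and $y^{2}\,\mathrm{d}p$; the $\sigma_{x}\,q\,y^{2}$ terms cancel between $y^{2}\,\mathrm{d}p$ and the quadratic covariation; and the mean-field cross terms pair up using the Fubini-type identity $\mathbb{E}[\Phi\,\mathbb{E}[\Psi]]=\mathbb{E}[\Phi]\,\mathbb{E}[\Psi]=\mathbb{E}[\Psi\,\mathbb{E}[\Phi]]$ applied pointwise in $t$, so that $p\,b_{y}\,\mathbb{E}[y^{2}]$ cancels against $y^{2}\,\mathbb{E}[b_{y}p]$ and $q\,\sigma_{y}\,\mathbb{E}[y^{2}]$ against $y^{2}\,\mathbb{E}[\sigma_{y}q]$. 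Combined with the terminal condition $p(T)=h_{x}+\mathbb{E}[h_{y}]$ and the analogous expectation swap at $t=T$, what remains is the duality identity
\begin{equation*}
\mathbb{E}\bigl[h_{x}y^{2}(T)+h_{y}\mathbb{E}[y^{2}(T)]\bigr]+\mathbb{E}\!\int_{0}^{T}\bigl(f_{x}y^{2}+f_{y}\mathbb{E}[y^{2}]\bigr)\mathrm{d}t=\mathbb{E}\!\int_{0}^{T}p(t)G(t)\,\mathrm{d}(\xi-\eta)(t).
\end{equation*}

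Inserting this identity into the variational inequality (3.1.10) of Lemma 5 replaces its $y^{2}$-dependent side by $\mathbb{E}\!\int_{0}^{T}p(t)G(t)\,\mathrm{d}(\xi-\eta)(t)$, and combining with the already present $\varphi$-term produces (3.1.11) at once. The main obstacle is not the overall strategy but the careful bookkeeping forced by the mean-field structure: one has to track the seven pairs of cross terms and check that each swap between a random integrand and the expectation of a dual quantity is legitimate, which is where the $L^{2}$-integrability of $(p,q)$ provided by Theorem 3.1 in Buckdahn, Li and Peng [11] enters. A secondary subtlety is that $\eta$ and $\xi$ may jump, so the integration-by-parts formula must be invoked in its c\`adl\`ag/c\`agl\`ad form; here the continuity of $p$ is essential, since it is what makes the jump cross-contributions vanish automatically.
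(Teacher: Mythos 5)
Your proposal is correct and follows essentially the same route as the paper: the paper's proof of this lemma likewise applies It\^{o}'s formula to the product of the adjoint process $p$ with the variational process $y^{2}$ (misprinted as $y^{1}$ in the text), obtains exactly the duality identity you describe, and then concludes via the variational inequality (3.1.10) of Lemma 5. Your additional remarks on the mean-field Fubini swaps and the vanishing jump cross-terms are details the paper leaves implicit, not a different argument.
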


\begin{proof}
Applying It\^{o}'s formula to $\left\langle y^{1}\left( t\right) ,p\left(
t\right) \right\rangle $ on $\left[ 0,T\right] ,$ we have%
\begin{eqnarray*}
&&\mathbb{E}\left[ h_{x}\left( X^{u,\eta }\left( T\right) ,\mathbb{E}\left[
X^{u,\eta }\left( T\right) \right] \right) y^{2}\left( T\right) +\mathbb{E}%
\left[ h_{y}\left( X^{u,\eta }\left( T\right) ,\mathbb{E}\left[ X^{u,\eta
}\left( T\right) \right] \right) \right] \left[ y^{2}\left( T\right) \right] %
\right]  \\
&&+\mathbb{E}\left[ \int_{0}^{T}f_{x}\left( t,X^{u,\eta }\left( t\right) ,%
\mathbb{E}\left[ X^{u,\eta }\left( t\right) \right] ,a\right) y^{2}\left(
t\right) \text{d}t\right]  \\
&&+\mathbb{E}\left[ \int_{0}^{T}f_{y}\left( t,X^{u,\eta }\left( t\right) ,%
\mathbb{E}\left[ X^{u,\eta }\left( t\right) \right] ,a\right) \mathbb{E}%
\left[ y^{2}\left( t\right) \right] \text{d}t\right]  \\
&&+\mathbb{E}\left[ \int_{0}^{T}\varphi \left( t\right) \text{d}\left( \xi
\left( t\right) -\eta \left( t\right) \right) \right]  \\
&=&\mathbb{E}\left[ \int_{0}^{T}\left( \varphi \left( t\right) +p\left(
t\right) G\left( t\right) \right) \text{d}\left( \xi \left( t\right) -\eta
\left( t\right) \right) \right] .
\end{eqnarray*}%
From Lemma 5, we get the desired result.
\end{proof}

Now we are able to give the proof of Theorem 2.

\begin{proof}
Proof of Theorem 1

: (3.1.6) can be seen in [12], Theorem 2.1. With the help of (3.1.11), the
proof of (3.1.7), (3.1.8) is going exactly as Theorem 3.7 in [8]. The proof
is complete.
\end{proof}

\section{Relaxed Singular Optimal Control Problem}

\subsection{Relaxed controls model}

In this subsection, we set up the relaxed model. Before that, we give an
example to illustrate our motivation.

\begin{example}
Let $U=\left\{ -1,1\right\} ,$%
\begin{equation*}
\mathcal{U=}\left\{ \left. v\left( \cdot \right) :\left[ 0,1\right]
\rightarrow U\right\vert v\left( \cdot \right) \text{ measurable}\right\} ,
\end{equation*}%
and%
\begin{equation*}
J\left( v\right) =\int_{0}^{1}\left( y^{0,v}\left( t\right) \right) ^{2}%
\text{d}t,
\end{equation*}%
where $y^{0,v}\left( t\right) $ denotes the solution of 
\begin{equation*}
\left\{ 
\begin{array}{l}
\text{d}y^{0,v}\left( t\right) =v\left( t\right) \text{d}t \\ 
y^{0,v}\left( 0\right) =0,\text{ }t\in \left[ 0,1\right] .%
\end{array}%
\right.
\end{equation*}%
The optimal control problem is that

\textbf{Problem}: Find a pair $\left( \bar{y}\left( \cdot \right) ,\bar{v}%
\left( \cdot \right) \right) $ such that 
\begin{equation*}
J\left( \bar{v}\right) =\inf\limits_{v\left( \cdot \right) \in \mathcal{U}%
}J\left( v\right) .
\end{equation*}%
We will show that 
\begin{equation*}
J\left( \bar{v}\right) =\inf\limits_{v\left( \cdot \right) \in \mathcal{U}%
}J\left( v\right) =0.
\end{equation*}%
Indeed, for any $n>0,$ let%
\begin{equation*}
v^{n}\left( t\right) =\left( -1\right) ^{k},\qquad \frac{k}{n}\leq t<\frac{%
k+1}{n},0\leq k\leq n-1.
\end{equation*}%
Then immediately, we have%
\begin{equation*}
\left\vert y^{0,v^{n}}\left( t\right) \right\vert ^{2}\leq \frac{1}{n^{2}}%
,\qquad J\left( v^{n}\right) \leq \frac{1}{n^{2}}.
\end{equation*}%
On the other hand, for any $v\in \mathcal{U}$, 
\begin{equation*}
J\left( v\right) =\int_{0}^{1}\left( y^{0,v}\left( t\right) \right) ^{2}%
\text{d}t\geq 0.
\end{equation*}%
Consequently, we derive that%
\begin{equation*}
J\left( \bar{v}\right) =0.
\end{equation*}%
However, the infimum $0$ could not be achieved. To see this, let $\left( 
\bar{y}\left( \cdot \right) ,\bar{v}\left( \cdot \right) \right) $ be the
optimal pair. Then 
\begin{equation*}
\bar{y}^{0,\bar{v}}\left( t\right) =\bar{v}\left( t\right) =0,
\end{equation*}
which is impossible. As a matter of fact, Let $\delta _{u}$ denote the
atomic measure concentrated at a single point $u.$ Then 
\begin{equation*}
\text{d}t\delta _{v^{n}\left( t\right) }\text{d}a\rightarrow \frac{1}{2}%
\text{d}t\left( \delta _{-1}+\delta _{1}\right) \text{d}a,\text{ }t\in \left[
0,1\right] .
\end{equation*}
\end{example}

The above example shows that the strict control problem defined in section
3, may fail to have an optimal solution. The reason is that the compact set $%
U$ of strict controls is too narrow and should be embedded into a wider
class with a richer topological structure for which the control problem
becomes solvable. Our main goal in this section is to establish a maximum
principle for relaxed-singular controls. This leads to necessary conditions
satisfied by an optimal relaxed-singular control, which exists under general
assumptions on the coefficients.

The idea of relaxed singular controls is to replace the $U$-valued process $%
u\left( t\right) $ with $P(U)$-valued process $q\left( t\right) $, where $%
P(U)$ is the space of probability measures equipped with the topology of
weak convergence (more information see in [7]).

\begin{definition}
A relaxed control is the term 
\begin{equation*}
q=\left( \Omega ,\mathcal{F},\left( \mathcal{F}_{t}\right) _{t\geq
0},P,W\left( t\right) ,q\left( t\right) ,\chi \left( t\right) ,\xi \right) 
\end{equation*}
such that

(1) $\left( \Omega ,\mathcal{F},\left( \mathcal{F}_{t}\right) _{t\geq
0},P\right) $ is a filtered probability space the usual conditions.

(2) $q\left( t\right) $ is a $P\left( \bar{U}\right) $-valued process,
progressively measurable with respect to $\left( \mathcal{F}_{t}\right)
_{t\geq 0}$ and such that for each $t,$ $\mathbf{I}_{\left( 0,t\right]
}\cdot q$ is $\mathcal{F}_{t}$-measurable.

(3) $\chi \left( t\right) $ is $\mathbb{R}^{n}$-valued and $\mathcal{F}_{t}$%
-adapted with continuous paths such that $\chi \left( 0\right) =\xi $ and
for each $f\in C_{b}^{2}\left( \mathbb{R}^{n};\mathbb{R}\right) $%
\begin{equation}
f\left( \chi \left( t\right) \right) -f\left( \xi \right)
-\int_{0}^{t}\int_{U}\mathcal{L}f\left( s,\chi \left( s\right) ,a\right)
q_{s}\left( \omega ,\text{d}a\right) \text{d}s  \tag{4.1.1}
\end{equation}%
is a $P$-martingale, where $\mathcal{L}$ is the infinitesimal generator.
\end{definition}

Obviously, The set of strict controls is embedded into the set of relaxed
controls by the mapping%
\begin{equation*}
u\rightarrow \text{d}t\delta _{u\left( t\right) }\text{d}a,\text{ }t\geq 0.
\end{equation*}

\begin{definition}
An admissible relaxed control $q$ is a relaxed control such that 
\begin{equation*}
\mathbb{E}\left[ \sup\limits_{t\in \left[ 0,T\right] }\left\vert q\left(
t\right) \right\vert ^{2}\right] <+\infty .
\end{equation*}
\end{definition}

We denote by $\mathcal{R}_{1}$ the set of all admissible relaxed controls
controls and denote by $\mathcal{R=R}_{1}\times \mathcal{U}_{2}$ the set of
relaxed-singular controls. We now introduce the following relaxed-singular
SDEs 
\begin{equation}
\left\{ 
\begin{array}{l}
dX^{q,\eta }\left( t\right) =\int_{U}b\left( t,X^{q,\eta }\left( t\right) ,%
\mathbb{E}\left[ X^{q,\eta }\left( t\right) \right] ,a\right) q\left( t,%
\text{d}a\right) \text{d}t \\ 
\qquad +\sigma \left( t,X^{q,\eta }\left( t\right) ,\mathbb{E}\left[
X^{q,\eta }\left( t\right) \right] \right) \text{d}W\left( t\right) +G\left(
t\right) d\eta \left( t\right) , \\ 
X^{q,\eta }\left( 0\right) =x_{0}\in \mathbb{R}^{n},\quad t\in \left[
0,+\infty \right) ,%
\end{array}%
\right.  \tag{4.1.2}
\end{equation}%
and the optimal relaxed singular control cost function 
\begin{eqnarray*}
\mathcal{J}\left( \left( q,\eta \right) \right) &=&\mathbb{E}\left[
\int_{U}\int_{0}^{T}f\left( t,X^{u,\eta }\left( t\right) ,\mathbb{E}\left[
X^{u,\eta }\left( t\right) \right] ,a\right) q\left( t,\text{d}a\right) 
\text{d}t\right. \\
&&\left. +h\left( X^{u,\eta }\left( T\right) ,\mathbb{E}\left[ X^{u,\eta
}\left( T\right) \right] \right) +\int_{0}^{T}k\left( t\right) \eta \left(
t\right) \right] .
\end{eqnarray*}%
A relaxed-singular control $\left( \bar{q},\bar{\eta}\right) $ is called
optimal if it solves%
\begin{equation}
\mathcal{J}\left( \left( \bar{q},\bar{\eta}\right) \right)
=\inf\limits_{\left( q,\eta \right) \in \mathcal{R}_{1}\times \mathcal{U}%
_{2}}\mathcal{J}\left( \left( q,\eta \right) \right) .  \tag{4.1.3}
\end{equation}%
As you have observed that the coefficients of equation (4.1.2) and the
running cost are linear with respect to the relaxed control variable. On the
other hand, we have replaced $\mathcal{U}_{1}$ by a larger space $P\left( 
\mathcal{U}_{1}\right) $ which is convex. Furthermore, it is fairly easy to
check that $l=\int_{U}b\left( t,X^{q,\eta }\left( t\right) ,\mathbb{E}\left[
X^{q,\eta }\left( t\right) \right] ,a\right) q\left( t,\text{d}a\right) $d$%
t, $ $l=b,f$, respectively, satisfy the assumption (H1). Therefore, for any $%
q\in \mathcal{R}_{1},$ SDEs (4.1.2) admit a unique strong solution and the
new cost function is well-defined.

\begin{remark}
Set $q\left( t\right) =\delta _{u\left( t\right) }$ at a single point $%
u\left( t\right) \in U.$ Then for any $t\in \left[ 0,T\right] ,$ we have,
for $l=b,f,$%
\begin{eqnarray*}
&&\int_{U}b\left( t,X^{q,\eta }\left( t\right) ,\mathbb{E}\left[ X^{q,\eta
}\left( t\right) \right] ,a\right) q\left( t,\text{d}a\right) dt \\
&=&\int_{U}b\left( t,X^{q,\eta }\left( t\right) ,\mathbb{E}\left[ X^{q,\eta
}\left( t\right) \right] ,a\right) \delta _{u\left( t\right) }\left( \text{d}%
a\right) dt \\
&=&b\left( t,X^{q,\eta }\left( t\right) ,\mathbb{E}\left[ X^{q,\eta }\left(
t\right) \right] ,u\left( t\right) \right) .
\end{eqnarray*}%
Simultaneously, $X^{q,\eta }\left( t\right) =X^{u,\eta }\left( t\right) $
and $\mathcal{J}\left( \left( q,\eta \right) \right) =J\left( \left( u,\eta
\right) \right) .$ Hence the problem of strict-singular controls problem is
a particular case of relaxed-singular control problem.
\end{remark}

Additionally, throughout this section we suppose that

\begin{enumerate}
\item[\textbf{(H4)}] $b,h$ are bounded.
\end{enumerate}

\begin{lemma}[\textbf{Chattering lemma}]
Let $q\left( \cdot \right) $ be a predictable process with values in the
space of probability measures on $U.$ Then there exists a sequence of
predictable processes $\left( u^{n}\left( \cdot \right) \right) _{n\geq 1}$
with values in $U$ such that the sequence of random measures $\delta
_{u^{n}\left( \cdot \right) }$d$a$d$t$ converges weakly to $q\left( t\right) 
$d$a$d$t,$ $P$-a.s.
\end{lemma}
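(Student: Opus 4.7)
The plan is to prove the Chattering Lemma in two stages: first, approximate $q(\cdot)$ by a piecewise-constant (in time) measure-valued process; second, replace each piecewise-constant measure by a Dirac-mass-valued process whose time average reproduces the measure. A diagonal argument then delivers the desired sequence.

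For the first stage, I would partition $[0,T]$ into intervals $I_k^n = [(k-1)T/n, kT/n)$ and define
\[ q^n(t,\omega) = \frac{n}{T}\int_{I_{k-1}^n} q(s,\omega)\,\text{d}s \quad \text{on } I_k^n, \]
with $q^n = \delta_{a_0}$ on $I_1^n$ for some fixed $a_0 \in U$. The one-step lag preserves predictability. Since $U$ is compact, $P(U)$ endowed with the weak topology is compact metrizable (e.g.\ by a Prokhorov-type metric), so one can apply the Lebesgue differentiation theorem: for $P$-a.e.\ $\omega$ and a.e.\ $t$, $q^n(t,\omega) \to q(t,\omega)$ weakly in $P(U)$. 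Dominated convergence applied to a countable dense family of test functions in $C_b([0,T] \times U)$ then yields
\[ \int_0^T \!\int_U f(t,a)\, q^n(t,\text{d}a)\, \text{d}t \longrightarrow \int_0^T \!\int_U f(t,a)\, q(t,\text{d}a)\, \text{d}t, \quad P\text{-a.s.} \]
For the second stage, fix $n$ and observe that on each $I_k^n$ the measure $q^n$ is $\mathcal{F}_{(k-1)T/n}$-measurable and constant in $t$. I would approximate $q^n|_{I_k^n}(\omega)$ weakly by a convex combination $\sum_{j=1}^{m_n} \lambda_{k,j}^n(\omega)\, \delta_{u_{k,j}^n(\omega)}$ with rational weights sharing a common denominator $N_n$, where the atoms and weights are chosen $\mathcal{F}_{(k-1)T/n}$-measurably. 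I then split $I_k^n$ into sub-intervals of length $\lambda_{k,j}^n(\omega)\cdot T/n$ and set $u^n(t,\omega) = u_{k,j}^n(\omega)$ on the $j$-th sub-interval. Using uniform continuity on $[0,T]\times U$ of finitely many test functions together with the shrinking mesh, $\int_0^T f(t, u^n(t,\omega))\, \text{d}t$ and $\int_0^T\!\int_U f(t,a)\, q^n(t,\text{d}a)\, \text{d}t$ have the same limit, giving weak convergence of $\delta_{u^n(\cdot)}\text{d}a\,\text{d}t$ to $q(t,\text{d}a)\,\text{d}t$ after diagonal extraction.

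The principal obstacle is carrying out the Dirac approximation in a genuinely predictable way: the map sending a measure in $P(U)$ to a finitely-supported weak approximation is not canonical, so one must invoke a Borel measurable selection (of the kind used in El Karoui et al.\ [7]). A convenient concrete route is to fix, for each $n$, a finite Borel partition $\{A_j^n\}_{j=1}^{m_n}$ of $U$ with $\text{diam}(A_j^n)\to 0$ and a Borel selector $a_j^n \in A_j^n$, then set $\lambda_{k,j}^n(\omega) = q^n|_{I_k^n}(\omega)(A_j^n)$ and $u_{k,j}^n(\omega) = a_j^n$; the weights $\lambda_{k,j}^n$ are $\mathcal{F}_{(k-1)T/n}$-measurable, and rational roundings with denominator $N_n$ introduce an error that vanishes as $N_n\to\infty$. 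Once this measurability is secured, predictability of $u^n$ follows on each $I_k^n$, and the weak-convergence and diagonalization steps are routine.
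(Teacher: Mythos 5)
The paper does not prove this lemma at all: it is stated as a classical result (the ``chattering lemma'') and implicitly referred to the references [7], [8], where it goes back to Fleming and to El Karoui et al. So there is no proof in the paper to compare against; what you have written is, in outline, exactly the standard argument those references use, and it is essentially correct. Your two-stage structure (time-averaging with a one-interval lag to keep predictability, then realization of a finitely supported measure as the occupation measure of a rapidly switching $U$-valued control) is the right one, and your concrete device for measurable selection --- a fixed Borel partition $\{A_j^n\}$ of $U$ with shrinking diameters and the weights $\lambda_{k,j}^n(\omega)=q^n|_{I_k^n}(\omega)(A_j^n)$ --- correctly sidesteps the non-canonical choice of atoms. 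Two points deserve a sentence more care if this were written out in full. First, the Lebesgue differentiation step uses averages over $I_{k-1}^n$ evaluated at $t\in I_k^n$, i.e.\ a family of intervals shrinking to $t$ from the left with a gap; this is still a regularly shrinking family (the interval has length comparable to its distance-plus-length from $t$), so the a.e.\ convergence holds, but that regularity should be stated. Second, the final diagonalization must couple the fineness of the partition of $U$ and the rational rounding denominator $N_n$ to the time mesh $n$ through the moduli of continuity of the first $n$ functions in your countable dense family of $C([0,T]\times U)$, so that a single subsequence works simultaneously for all test functions on a single $P$-null-set complement. With those details supplied, your proof is complete and is the standard one.
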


We now show the stability property of controlled mean-field SDEs with
respect to control variable.

\begin{lemma}
Assume (H1), (H3) and (H4) hold. For any relaxed control $\left( q,\eta
\right) $, let $X^{q,\eta }\left( \cdot \right) $ denote the corresponding
trajectory. Then there exists a sequence $\left( u^{n},\eta \right) _{n\geq
1}\subset \mathcal{U}_{1}\times \mathcal{U}_{2}$ such that%
\begin{equation}
\lim\limits_{n\rightarrow +\infty }\mathbb{E}\left[ \sup\limits_{t\in \left[
0,T\right] }\left\vert X^{u^{n},\eta }\left( t\right) -X^{q,\eta }\left(
t\right) \right\vert ^{2}\right] =0,  \tag{4.1.4}
\end{equation}%
\begin{equation}
\lim\limits_{n\rightarrow +\infty }J\left( u^{n},\eta \right) =J\left(
q,\eta \right) .  \tag{4.1.5}
\end{equation}
\end{lemma}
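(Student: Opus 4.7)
The plan is to combine the Chattering Lemma with a standard stability estimate for mean-field SDEs, handling the relaxed coefficient by splitting the difference via an auxiliary intermediate process.

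First I would invoke the Chattering Lemma to obtain a sequence $(u^{n}(\cdot))_{n\geq 1}$ of strict $U_{1}$-valued predictable processes such that $\delta_{u^{n}(t)}(\mathrm{d}a)\mathrm{d}t$ converges weakly to $q(t,\mathrm{d}a)\mathrm{d}t$, $P$-a.s. Let $X^{u^{n},\eta}(\cdot)$ be the solution of (2.0.1) associated with $(u^{n},\eta)$. To estimate $X^{u^{n},\eta}-X^{q,\eta}$, I would write the drift difference as
\begin{eqnarray*}
\lefteqn{b(s,X^{u^{n},\eta}(s),\mathbb{E}[X^{u^{n},\eta}(s)],u^{n}(s))-\int_{U}b(s,X^{q,\eta}(s),\mathbb{E}[X^{q,\eta}(s)],a)q(s,\mathrm{d}a)}\\
&=&\bigl[b(s,X^{u^{n},\eta}(s),\mathbb{E}[X^{u^{n},\eta}(s)],u^{n}(s))-b(s,X^{q,\eta}(s),\mathbb{E}[X^{q,\eta}(s)],u^{n}(s))\bigr]\\
&&+\,\bigl[b(s,X^{q,\eta}(s),\mathbb{E}[X^{q,\eta}(s)],u^{n}(s))-\textstyle\int_{U}b(s,X^{q,\eta}(s),\mathbb{E}[X^{q,\eta}(s)],a)q(s,\mathrm{d}a)\bigr],
\end{eqnarray*}
and handle the diffusion difference by Lipschitz continuity alone (since $\sigma$ carries no control variable). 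The first bracket is controlled via (H1) by Lipschitz constants times $|X^{u^{n},\eta}(s)-X^{q,\eta}(s)|+\mathbb{E}|X^{u^{n},\eta}(s)-X^{q,\eta}(s)|$, while the second bracket is an "error term" I would denote by $\Lambda_{n}(s)$.

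Next I would apply the Burkholder-Davis-Gundy inequality, Jensen's inequality (to absorb the mean-field part), and Gronwall's lemma to obtain
\[
\mathbb{E}\Bigl[\sup_{t\in[0,T]}|X^{u^{n},\eta}(t)-X^{q,\eta}(t)|^{2}\Bigr]\leq C\,\mathbb{E}\int_{0}^{T}|\Lambda_{n}(s)|^{2}\mathrm{d}s,
\]
and the crux of the argument is then to show that the right-hand side tends to zero. For this, one freezes $\omega$ and uses the weak convergence of $\delta_{u^{n}(s,\omega)}\mathrm{d}s$ to $q(s,\omega,\mathrm{d}a)\mathrm{d}s$: because $b$ is continuous in $a$ and uniformly bounded by (H1) and (H4), the map $a\mapsto b(s,X^{q,\eta}(s,\omega),\mathbb{E}[X^{q,\eta}(s)],a)$ is a bounded continuous test function (after a standard measurable selection/truncation of the $s$-dependence), so $\int_{0}^{T}\Lambda_{n}(s,\omega)\,\mathrm{d}s\to 0$ pointwise in $\omega$. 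Combined with the uniform bound $|\Lambda_{n}(s)|\leq 2\|b\|_{\infty}$, dominated convergence yields $\mathbb{E}\int_{0}^{T}|\Lambda_{n}(s)|^{2}\mathrm{d}s\to 0$, giving (4.1.4).

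For (4.1.5) I would decompose $J(u^{n},\eta)-J(q,\eta)$ into three pieces: the $f$-integral, the $h$-term, and the singular-cost term (which is identical in both controls and cancels). The $h$-term converges by (4.1.4), the continuity of $h_{x},h_{y}$, and the boundedness of $h$ from (H4), via dominated convergence. The $f$-integral is split analogously to the $b$-term above, producing a Lipschitz part controlled by (4.1.4) and an error part $\tilde{\Lambda}_{n}$ that vanishes by the same weak convergence/dominated convergence argument, using uniform boundedness of $f$ from (H1). The main obstacle is the middle step: justifying interchange of the weak limit $\delta_{u^{n}}\to q$ with the $L^{2}(\Omega\times[0,T])$ norm in the presence of the random, time-dependent integrand $b(s,X^{q,\eta}(s),\mathbb{E}[X^{q,\eta}(s)],\cdot)$, which is where the uniform bound (H4) is indispensable.
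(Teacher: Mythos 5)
Your overall strategy --- chattering lemma, a Lipschitz/Gronwall estimate, and a weak-convergence error term --- is the same as the paper's, but the form in which you carry the error term through Gronwall is wrong, and the final limit you claim is false. You reduce everything to showing $\mathbb{E}\int_0^T|\Lambda_n(s)|^2\,\mathrm{d}s\to 0$, and you try to deduce this from $\int_0^T\Lambda_n(s,\omega)\,\mathrm{d}s\to 0$ together with the uniform bound $|\Lambda_n|\leq 2\|b\|_\infty$ via dominated convergence. That inference is a non sequitur: convergence of the time integral of $\Lambda_n$ says nothing about the time integral of $|\Lambda_n|^2$. Worse, for chattering sequences the latter genuinely does not vanish, because $\Lambda_n(s)=b(s,X^{q,\eta}(s),\mathbb{E}[X^{q,\eta}(s)],u^n(s))-\int_U b(s,X^{q,\eta}(s),\mathbb{E}[X^{q,\eta}(s)],a)\,q(s,\mathrm{d}a)$ oscillates rapidly without tending to zero pointwise in $s$; in the paper's own Example with $U=\{-1,1\}$ and limit measure $\frac{1}{2}(\delta_{-1}+\delta_{1})$, one has $\Lambda_n\equiv\pm 1$ and $\int_0^1|\Lambda_n(s)|^2\,\mathrm{d}s=1$ for every $n$. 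So the bound you state is a true inequality (it follows from Cauchy--Schwarz) but its right-hand side does not go to zero, and the proof collapses at exactly the step you yourself identify as the crux.

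The repair, which is what the paper does, is never to square inside the time integral on the error term. Since the drift error enters the state equation only through the finite-variation term $\int_0^t\Lambda_n(s)\,\mathrm{d}s$ (unlike the stochastic integral, no isometry forces the square inside), the Gronwall step should yield
\begin{equation*}
\mathbb{E}\Bigl[\sup_{t\in[0,T]}\bigl|X^{u^n,\eta}(t)-X^{q,\eta}(t)\bigr|^2\Bigr]\leq C\,\mathbb{E}\Bigl[\sup_{t\in[0,T]}\Bigl|\int_0^t\Lambda_n(s)\,\mathrm{d}s\Bigr|^2\Bigr],
\end{equation*}
with the Lipschitz pieces absorbed into the Gronwall kernel and the error kept as the square of a time integral (this is the quantity $I_t^n$ in the paper). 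That quantity is dominated by $(2T\|b\|_\infty)^2$ and tends to zero $P$-a.s.\ by the weak convergence $\delta_{u^n(s)}(\mathrm{d}a)\,\mathrm{d}s\to q(s,\mathrm{d}a)\,\mathrm{d}s$ applied to the bounded integrand $b(s,X^{q,\eta}(s,\omega),\mathbb{E}[X^{q,\eta}(s)],a)$, so dominated convergence in $\omega$ finishes (4.1.4). The remainder of your argument --- the two-bracket splitting of the drift, treating $\sigma$ by Lipschitz continuity alone, and the three-piece decomposition of the cost for (4.1.5), where the $f$-error naturally already appears as a single time integral and hence in the correct form --- matches the paper's proof once this correction is made.
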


\begin{proof}
From standard estimates and Burkholder-Davis-Gundy inequality we get that,
for some $C>0$, only depending on $T,$ and the Lipschitz coefficient of $b$, 
$\sigma $:%
\begin{eqnarray*}
&&\mathbb{E}\left[ \sup\limits_{t\in \left[ 0,T\right] }\left\vert
X^{u^{n},\eta }\left( t\right) -X^{q,\eta }\left( t\right) \right\vert ^{2}%
\right] \\
&\leq &5C\mathbb{E}\left[ \int_{0}^{t}\left\vert b\left( s,X^{u^{n},\eta
}\left( s\right) ,\mathbb{E}\left[ X^{u^{n},\eta }\left( s\right) \right]
,u^{n}\left( s\right) \right) \right. \right. \\
&&\left. -b\left( s,X^{q,\eta }\left( s\right) ,\mathbb{E}\left[
X^{u^{n},\eta }\left( s\right) \right] ,u^{n}\left( s\right) \right)
\right\vert ^{2}\text{d}s \\
&&+\int_{0}^{t}\left\vert b\left( s,X^{q,\eta }\left( s\right) ,\mathbb{E}%
\left[ X^{u^{n},\eta }\left( s\right) \right] ,u^{n}\left( s\right) \right)
\right. \\
&&\left. -b\left( s,X^{q,\eta }\left( s\right) ,\mathbb{E}\left[ X^{q,\eta
}\left( s\right) \right] ,u^{n}\left( s\right) \right) \right\vert ^{2}\text{%
d}s \\
&&+5\left\vert \int_{0}^{t}\int_{U}b\left( s,X^{q,\eta }\left( s\right) ,%
\mathbb{E}\left[ X^{q,\eta }\left( s\right) \right] ,a\right) q^{n}\left(
s\right) \text{d}a\text{d}s\right. \\
&&-\left. \left. \int_{0}^{t}\int_{U}b\left( s,X^{q,\eta }\left( s\right) ,%
\mathbb{E}\left[ X^{q,\eta }\left( s\right) \right] ,a\right) q\left(
s\right) \text{d}a\text{d}s\right\vert ^{2}\right] \\
&&+5C\mathbb{E}\left[ \int_{0}^{t}\left\vert \sigma \left( s,X^{u^{n},\eta
}\left( s\right) ,\mathbb{E}\left[ X^{u^{n},\eta }\left( s\right) \right]
\right) \right. \right. \\
&&\left. -\sigma \left( s,X^{q,\eta }\left( s\right) ,\mathbb{E}\left[
X^{u^{n},\eta }\left( s\right) \right] \right) \right\vert ^{2}\text{d}s \\
&&+8\mathbb{E}\left[ \int_{0}^{t}\left\vert \sigma \left( s,X^{q,\eta
}\left( s\right) ,\mathbb{E}\left[ X^{u^{n},\eta }\left( s\right) \right]
\right) \right. \right. \\
&&\left. -\sigma \left( s,X^{q,\eta }\left( s\right) ,\mathbb{E}\left[
X^{q,\eta }\left( s\right) \right] \right) \right\vert ^{2}\text{d}s \\
&\leq &I_{t}^{n}+C\mathbb{E}\int_{0}^{t}\left\vert X^{u^{n},\eta }\left(
s\right) -X^{q,\eta }\left( s\right) \right\vert ^{2}\text{d}s,
\end{eqnarray*}%
where%
\begin{equation*}
q^{n}\left( s\right) \left( \text{d}a\right) =\delta _{u^{n}\left( s\right) }%
\text{d}a,
\end{equation*}%
and 
\begin{eqnarray*}
I_{t}^{n} &=&\mathbb{E}\left\vert \int_{0}^{t}\int_{U}b\left( s,X^{q,\eta
}\left( s\right) ,\mathbb{E}\left[ X^{q,\eta }\left( s\right) \right]
,a\right) q^{n}\left( s\right) \left( \text{d}a\right) \text{d}s\right. \\
&&\left. -\int_{0}^{t}\int_{U}b\left( s,X^{q,\eta }\left( s\right) ,\mathbb{E%
}\left[ X^{q,\eta }\left( s\right) \right] ,a\right) q\left( s\right) \left( 
\text{d}a\right) \text{d}s\right\vert ^{2}.
\end{eqnarray*}%
Since $b$ is bounded and continuous, and by Lemma 11, using the dominated
convergence theorem, we get 
\begin{equation*}
\lim\limits_{n\rightarrow +\infty }I_{t}^{n}=0.
\end{equation*}%
The main result follows from Gronwall's inequality. Similarly, since $f$, $h$
are Lipschitz continuous in $x,y,$ by Cauchy-Schwarz inequality we have 
\begin{eqnarray*}
&&\left\vert J\left( q^{n}\left( \cdot \right) ,\eta \left( \cdot \right)
\right) -J\left( q\left( \cdot \right) ,\eta \left( \cdot \right) \right)
\right\vert \\
&\leq &C\left( \mathbb{E}\left[ \left\vert X^{u^{n},\eta }\left( T\right)
-X^{q,\eta }\left( T\right) \right\vert ^{2}\right] \right) ^{\frac{1}{2}} \\
&&+C\int_{0}^{T}\left( \mathbb{E}\left[ \left\vert X^{u^{n},\eta }\left(
s\right) -X^{q,\eta }\left( s\right) \right\vert ^{2}\right] \right) ^{\frac{%
1}{2}}\text{d}s \\
&&+\left( \mathbb{E}\left\vert \int_{0}^{T}\int_{U}h\left( s,X^{q,\eta
}\left( s\right) ,\mathbb{E}\left[ X^{q,\eta }\left( s\right) \right]
,a\right) q^{n}\left( s\right) \left( \text{d}a\right) \text{d}s\right.
\right. \\
&&\left. \left. -\int_{0}^{T}\int_{U}h\left( s,X^{q,\eta }\left( s\right) ,%
\mathbb{E}\left[ X^{q,\eta }\left( s\right) \right] ,a\right) q\left(
s\right) \left( \text{d}a\right) \text{d}s\right\vert ^{2}\right) ^{\frac{1}{%
2}}.
\end{eqnarray*}%
Note that $h$ is continuous and bounded. From (4.1.4) and applying the
dominated convergence theorem, we get the desired result.
\end{proof}

Clearly, the strict and relaxed optimal control problems have the same value
function.

\subsection{The maximum principle for nearly strict optimal controls}

In this subsection, we study near-optimal rather than optimal controls of
the control system. The precise definition of the near-optimality mainly
from [32], is

\begin{definition}
For a given $\varepsilon >0,$ an admissible pair $\left( X^{u^{\varepsilon
},\eta ^{\varepsilon }}\left( \cdot \right) ,u^{\varepsilon }\left( \cdot
\right) ,\eta ^{\varepsilon }\left( \cdot \right) \right) ,$ is called $%
\varepsilon $-optimal of system (2.1.1) if 
\begin{equation}
\left\vert J\left( u^{\varepsilon },\eta ^{\varepsilon }\right) -J\left(
u,\eta \right) \right\vert \leq \varepsilon .  \tag{4.2.1}
\end{equation}
\end{definition}

\begin{lemma}[{\textbf{Ekeland's principle [17]}}]
Let $\left( S,d\right) $ be a complete metric space and $\rho \left( \cdot
\right) :S\rightarrow R$ be lower-semicontinuous and bounded from below. For 
$\varepsilon \geq 0,$ suppose that $u^{\varepsilon }\in S$ satisfies%
\begin{equation*}
\rho \left( u^{\varepsilon }\right) \leq \inf\limits_{u\in S}\rho \left(
u\right) +\varepsilon .
\end{equation*}%
Then for any $\lambda >0,$ there exists $u^{\lambda }\in S$ such that%
\begin{equation}
\left\{ 
\begin{array}{lll}
\rho \left( u^{\lambda }\right) & \leq & \rho \left( u^{\varepsilon }\right)
, \\ 
d\left( u^{\lambda },u^{\varepsilon }\right) & \leq & \lambda , \\ 
\rho \left( u^{\lambda }\right) & \leq & \rho \left( u\right) +\frac{%
\varepsilon }{\lambda }d\left( u,u^{\lambda }\right) ,\text{ for all }u\in S.%
\end{array}%
\right.  \tag{4.2.2}
\end{equation}
\end{lemma}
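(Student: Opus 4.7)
The plan is the classical Bishop--Phelps--Br\o nsted construction, reduced to the case $\lambda=1$ by rescaling. First I would replace $d$ by the equivalent metric $\tilde d(u,v):=d(u,v)/\lambda$; then the desired third inequality becomes $\rho(u^{\lambda})\leq \rho(u)+\varepsilon \tilde d(u,u^{\lambda})$ and the distance bound becomes $\tilde d(u^{\lambda},u^{\varepsilon})\leq 1$. The space $(S,\tilde d)$ remains complete and $\rho$ remains lower-semicontinuous and bounded below, so we may work with $\tilde d$ throughout and undo the rescaling only at the end.

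Next I would introduce the partial order $u\preceq v$ iff $\rho(u)+\varepsilon \tilde d(u,v)\leq \rho(v)$. Reflexivity is immediate; antisymmetry and transitivity follow from the triangle inequality for $\tilde d$. A key observation is that each order interval $S(v):=\{u\in S:u\preceq v\}$ is closed, since $\rho$ is lower-semicontinuous and $\tilde d(\cdot,v)$ is continuous, and is bounded in $\tilde d$ (by $\varepsilon^{-1}(\rho(v)-\inf\rho)$). This lets me treat $\inf_{u\in S(v)}\rho(u)$ as a legitimate quantity.

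The construction is inductive. Set $u_{0}:=u^{\varepsilon}$ and, given $u_{n}$, pick $u_{n+1}\in S(u_{n})$ satisfying $\rho(u_{n+1})\leq \inf_{u\in S(u_{n})}\rho(u)+2^{-(n+1)}$. The defining relation of $\preceq$ gives $\varepsilon \tilde d(u_{n+1},u_{n})\leq \rho(u_{n})-\rho(u_{n+1})$, so the sequence $\rho(u_{n})$ is nonincreasing and bounded below, hence convergent, and the telescoping estimate
\[
\varepsilon\sum_{n\geq 0}\tilde d(u_{n+1},u_{n})\ \leq\ \rho(u_{0})-\lim_{n}\rho(u_{n})\ \leq\ \rho(u^{\varepsilon})-\inf_{S}\rho\ \leq\ \varepsilon
\]
shows $(u_{n})$ is Cauchy and $\sum_{n}\tilde d(u_{n+1},u_{n})\leq 1$. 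By completeness, $u_{n}\to u^{\lambda}$ for some $u^{\lambda}\in S$, with $\tilde d(u^{\lambda},u^{\varepsilon})\leq 1$. Transitivity of $\preceq$ and closedness of each $S(u_{n})$ give $u^{\lambda}\preceq u_{n}$ for every $n$; taking $n=0$ yields $\rho(u^{\lambda})\leq \rho(u^{\varepsilon})$.

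The delicate step, which I expect to be the main obstacle, is the third inequality. Suppose, toward a contradiction, that some $u^{\ast}\neq u^{\lambda}$ satisfies $\rho(u^{\ast})+\varepsilon \tilde d(u^{\ast},u^{\lambda})<\rho(u^{\lambda})$, i.e.\ $u^{\ast}\preceq u^{\lambda}$ strictly. Since $u^{\lambda}\preceq u_{n}$, transitivity gives $u^{\ast}\preceq u_{n}$, so $u^{\ast}\in S(u_{n})$ and $\rho(u^{\ast})\geq \rho(u_{n+1})-2^{-(n+1)}$. Letting $n\to\infty$ and using lower semicontinuity to get $\rho(u^{\lambda})\leq \liminf_{n}\rho(u_{n+1})$ would yield $\rho(u^{\ast})\geq \rho(u^{\lambda})$, contradicting the strict inequality assumed. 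Undoing the rescaling $\tilde d \mapsto d$ converts $\varepsilon$ into $\varepsilon/\lambda$ and recovers the three statements in \textup{(4.2.2)}.
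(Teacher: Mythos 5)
This lemma is not proved in the paper at all: it is quoted as a known classical result (Ekeland's variational principle) with a pointer to reference [17], so there is no in-paper argument to compare yours against. Your proposal is the standard Br\o nsted-ordering proof of that theorem and it is correct: the rescaling $\tilde d=d/\lambda$, the partial order $u\preceq v\iff\rho(u)+\varepsilon\tilde d(u,v)\le\rho(v)$, the almost-minimizing iteration, the telescoping bound $\varepsilon\sum_{n}\tilde d(u_{n+1},u_{n})\le\rho(u^{\varepsilon})-\inf_{S}\rho\le\varepsilon$, and the closedness-plus-transitivity argument showing the limit $u^{\lambda}$ is $\preceq$-minimal below $u_{0}=u^{\varepsilon}$ all check out, and the final contradiction for the third inequality is sound (note you do not even need $u^{\ast}\neq u^{\lambda}$ there, since the negated inequality already forces $\rho(u^{\ast})<\rho(u^{\lambda})$). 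The one loose end is the degenerate case $\varepsilon=0$, which the statement explicitly permits: there antisymmetry of $\preceq$ fails and your Cauchy estimate gives no control on $\sum_{n}\tilde d(u_{n+1},u_{n})$, since every step of your quantitative bookkeeping implicitly divides by $\varepsilon$. That case needs a one-line separate remark: if $\varepsilon=0$ then $u^{\varepsilon}$ is already a global minimizer and $u^{\lambda}:=u^{\varepsilon}$ satisfies all three conditions of (4.2.2) trivially.
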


\noindent For any $\left( u,\eta \right) ,$ $\left( v,\xi \right) \in 
\mathcal{U=}\mathcal{U}_{1}\times \mathcal{U}_{2},$ we define%
\begin{eqnarray*}
d_{1}\left( u\left( \cdot \right) ,v\left( \cdot \right) \right) &=&\tilde{P}%
\left\{ \left( t,\omega \right) \in \left[ 0,T\right] \times \Omega :u\left(
t,\omega \right) \neq v\left( t,\omega \right) \right\} , \\
d_{2}\left( \eta ,\xi \right) &=&\mathbb{E}\left( \sup\limits_{t\in \left[
0,T\right] }\left\vert \eta \left( t\right) -\xi \left( t\right) \right\vert
^{2}\right) ^{\frac{1}{2}}, \\
d\left( \left( u,\eta \right) ,\left( v,\xi \right) \right) &=&d_{1}\left(
u\left( \cdot \right) ,v\left( \cdot \right) \right) +d_{2}\left( \eta ,\xi
\right) .
\end{eqnarray*}%
where $\tilde{P}$ is the product measure of Lebesgue measure and $P.$ Since $%
U$ is closed, it can be shown that $\left( \mathcal{U},d\right) $ is a
complete metric space in [8] Lemma 4.5. Moreover, under the assumptions
(H1)-(H3), it is easy to check that $J(u\left( \cdot \right) ,\eta \left(
\cdot \right) )$ is continuous on $\mathcal{U}$ endowed with the metric $d$
above.

Now given any optimal relaxed control $\left( \mu \left( \cdot \right) ,\xi
\left( \cdot \right) \right) \in \mathcal{R}_{1}\times \mathcal{U}_{2}$, we
denote $X^{\mu ,\xi }\left( \cdot \right) $ the corresponding solution of
(4.1.2). From Lemma 11 and Lemma 12, there exists a sequence $\left(
u^{n}\left( \cdot \right) \right) _{n\geq 1}$ of strict control such that%
\begin{equation*}
\mu ^{n}\left( t\right) \left( \text{d}a\right) \text{d}t=\delta
_{u^{n}\left( t\right) }\left( \text{d}a\right) \text{d}t\rightarrow \mu
\left( t\right) \left( \text{d}a\right) \text{d}t,\text{ weakly, }P\text{%
-a.s.}
\end{equation*}%
and 
\begin{equation}
\lim\limits_{n\rightarrow +\infty }\mathbb{E}\left[ \sup\limits_{t\in \left[
0,T\right] }\left\vert X^{u^{n},\eta }\left( t\right) -X^{\mu ,\eta }\left(
t\right) \right\vert ^{2}\right] =0.  \tag{4.2.3}
\end{equation}%
From (4.1.5), there exists a positive sequence $\left( \varepsilon
_{n}\right) _{n\geq 1}$ with $\varepsilon _{n}\rightarrow 0,$ as $%
n\rightarrow +\infty $ such that%
\begin{equation}
J\left( u^{n}\left( \cdot \right) ,\eta \left( \cdot \right) \right)
=\inf\limits_{\left( v\left( \cdot \right) ,\xi \left( \cdot \right) \right)
\in \mathcal{U}}J\left( v\left( \cdot \right) ,\xi \left( \cdot \right)
\right) +\varepsilon _{n}.  \tag{4.2.4}
\end{equation}%
Then for $\lambda =\sqrt{\varepsilon _{n}},$ there exists $\left( u^{\sqrt{%
\varepsilon _{n}}}\left( \cdot \right) ,\eta \left( \cdot \right) \right)
\in \mathcal{U},$ such that 
\begin{equation}
\left\{ 
\begin{array}{l}
J\left( u^{\sqrt{\varepsilon _{n}}}\left( \cdot \right) ,\eta \left( \cdot
\right) \right) \leq \inf\limits_{\left( v,\xi \right) \in \mathcal{U}%
}J\left( v\left( \cdot \right) ,\xi \left( \cdot \right) \right)
+\varepsilon _{n}, \\ 
d\left( \left( u^{\sqrt{\varepsilon _{n}}}\left( \cdot \right) ,\eta \left(
\cdot \right) \right) ,\left( u^{n}\left( \cdot \right) ,\eta \left( \cdot
\right) \right) \right) \leq \sqrt{\varepsilon _{n}}, \\ 
J\left( \left( u^{\sqrt{\varepsilon _{n}}}\left( \cdot \right) ,\eta \left(
\cdot \right) \right) \right) \leq J\left( v\left( \cdot \right) ,\xi \left(
\cdot \right) \right) +\sqrt{\varepsilon _{n}}d\left[ \left( u^{\sqrt{%
\varepsilon _{n}}}\left( \cdot \right) ,\eta \left( \cdot \right) \right)
,\left( v\left( \cdot \right) ,\xi \left( \cdot \right) \right) \right] .%
\end{array}%
\right.   \tag{4.2.5}
\end{equation}%
Define 
\begin{equation}
\left( u^{\sqrt{\varepsilon _{n}},\alpha }\left( t\right) ,\eta \left(
t\right) \right) =\left\{ 
\begin{array}{ll}
\left( v,\eta \left( t\right) \right) , & \text{if }\tau \leq t\leq \tau
+\alpha , \\ 
\left( u^{\sqrt{\varepsilon _{n}},\alpha }\left( t\right) ,\eta \left(
t\right) \right) , & \text{otherwise,}%
\end{array}%
\right.   \tag{4.2.6}
\end{equation}%
and 
\begin{equation}
\left( u^{\sqrt{\varepsilon _{n}}}\left( t\right) ,\eta ^{\alpha }\left(
t\right) \right) =\left( u^{\sqrt{\varepsilon _{n}}}\left( t\right) ,\eta
\left( t\right) +\alpha \left( \xi \left( t\right) -\eta \left( t\right)
\right) \right) .  \tag{4.2.7}
\end{equation}%
Substituting (4.2.6) and (4.2.7) in (4.2.5), respectively, we have 
\begin{eqnarray*}
J\left( \left( u^{\sqrt{\varepsilon _{n}}}\left( \cdot \right) ,\eta \left(
\cdot \right) \right) \right)  &\leq &J\left( u^{\sqrt{\varepsilon _{n}}%
,\alpha }\left( \cdot \right) ,\eta \left( \cdot \right) \right)  \\
&&+\sqrt{\varepsilon _{n}}d\left[ \left( u^{\sqrt{\varepsilon _{n}}}\left(
\cdot \right) ,\eta \left( \cdot \right) \right) ,\left( u^{\sqrt{%
\varepsilon _{n}},\alpha }\left( \cdot \right) ,\eta \left( \cdot \right)
\right) \right] ,
\end{eqnarray*}%
and 
\begin{eqnarray*}
J\left( \left( u^{\sqrt{\varepsilon _{n}}}\left( \cdot \right) ,\eta \left(
\cdot \right) \right) \right)  &\leq &J\left( u^{\sqrt{\varepsilon _{n}}%
}\left( \cdot \right) ,\eta ^{\alpha }\left( \cdot \right) \right)  \\
&&+\sqrt{\varepsilon _{n}}d\left[ \left( u^{\sqrt{\varepsilon _{n}}}\left(
\cdot \right) ,\eta \left( \cdot \right) \right) ,u^{\sqrt{\varepsilon _{n}}%
}\left( \cdot \right) ,\eta ^{\alpha }\left( \cdot \right) \right] .
\end{eqnarray*}%
According to the definition of $d_{1}$ and $d_{2}$ and $M=\mathbb{E}\left[
\left\vert \eta \left( T\right) \right\vert ^{2}+\left\vert \xi \left(
T\right) \right\vert ^{2}\right] <+\infty ,$ we obtain that%
\begin{equation}
0\leq J\left( u^{\sqrt{\varepsilon _{n}},\alpha }\left( \cdot \right) ,\eta
\left( \cdot \right) \right) -J\left( \left( u^{\sqrt{\varepsilon _{n}}%
}\left( \cdot \right) ,\eta \left( \cdot \right) \right) \right) +\sqrt{%
\varepsilon _{n}}C_{1}\alpha ,  \tag{4.2.8}
\end{equation}%
and 
\begin{equation}
0\leq J\left( u^{\sqrt{\varepsilon _{n}}}\left( \cdot \right) ,\eta ^{\alpha
}\left( \cdot \right) \right) -J\left( \left( u^{\sqrt{\varepsilon _{n}}%
}\left( \cdot \right) ,\eta \left( \cdot \right) \right) \right) +\sqrt{%
\varepsilon _{n}}C_{2}\alpha ,  \tag{4.2.9}
\end{equation}%
where $C_{i},$ $i=1,2$ are positive constants depending on $U_{1}$, $M.$

As a consequence, we have the following theorem:

\begin{theorem}
Assume that (H1), (H3) and (H4) hold. For each $\varepsilon _{n}\in \left[
0,1\right] $, there exists a strict $\varepsilon _{n}$-optimal control $%
\left( u^{n}\left( \cdot \right) ,\eta \left( \cdot \right) \right) \in 
\mathcal{U}$ such that there exists a unique pair of adapted processes $%
\left( p^{n}\left( \cdot \right) ,q^{n}\left( \cdot \right) \right) $
satisfying 
\begin{equation*}
\mathbb{E}\left[ \sup\limits_{t\in \left[ 0,T\right] }\left\vert p^{n}\left(
t\right) \right\vert ^{2}\right] +\mathbb{E}\left[ \int_{0}^{T}\left\vert
q^{n}\left( t\right) \right\vert ^{2}\text{d}t\right] <+\infty ,
\end{equation*}%
which is the solution of the following mean-field BSDEs, 
\begin{equation}
\left\{ 
\begin{array}{l}
\text{d}p^{n}\left( t\right) =-\left[ b_{x}\left( t,X^{u^{n},\eta }\left(
t\right) ,\mathbb{E}\left[ X^{u^{n},\eta }\left( t\right) \right]
,u^{n}\left( t\right) \right) p^{n}\left( t\right) \right. \\ 
\qquad +\mathbb{E}\left[ b_{y}\left( t,X^{u^{n},\eta }\left( t\right) ,%
\mathbb{E}\left[ X^{u^{n},\eta }\left( t\right) \right] ,u^{n}\left(
t\right) \right) p^{n}\left( t\right) \right] \\ 
\qquad +\sigma _{x}\left( t,X^{u^{n},\eta }\left( t\right) ,\mathbb{E}\left[
X^{u^{n},\eta }\left( t\right) \right] \right) q^{n}\left( t\right) \\ 
\qquad +\mathbb{E}\left[ \sigma _{y}\left( t,X^{u^{n},\eta }\left( t\right) ,%
\mathbb{E}\left[ X^{u^{n},\eta }\left( t\right) \right] \right) q^{n}\left(
t\right) \right] \\ 
\qquad -f_{x}\left( t,X^{u^{n},\eta }\left( t\right) ,\mathbb{E}\left[
X^{u^{n},\eta }\left( t\right) \right] ,u^{n}\left( t\right) \right) \\ 
\qquad \left. -\mathbb{E}\left[ f_{y}\left( t,X^{u^{n},\eta }\left( t\right)
,\mathbb{E}\left[ X^{u^{n},\eta }\left( t\right) \right] ,u^{n}\left(
t\right) \right) \right] \right] \text{d}t+q^{n}\left( t\right) \text{d}%
W\left( t\right) , \\ 
p^{n}\left( T\right) =h_{x}\left( X^{u^{n},\eta }\left( T\right) ,\mathbb{E}%
\left[ X^{u^{n},\eta }\left( T\right) \right] \right) \\ 
\qquad +\mathbb{E}\left[ h_{y}\left( X^{u^{n},\eta }\left( T\right) ,\mathbb{%
E}\left[ X^{u^{n},\eta }\left( T\right) \right] \right) \right] .%
\end{array}%
\right.  \tag{4.2.10}
\end{equation}%
such that for all $\left( v\left( \cdot \right) ,\xi \left( \cdot \right)
\right) \in \mathcal{U}$ 
\begin{eqnarray*}
0 &\leq &\mathbb{E}\left[ \mathcal{H}\left( t,X^{u^{n},\eta }\left( t\right)
,v\left( t\right) ,p^{n}\left( t\right) ,q^{n}\left( t\right) \right) \right.
\\
&&\left. -\mathcal{H}\left( t,X^{u^{n},\eta }\left( t\right) ,u^{n}\left(
t\right) ,p^{n}\left( t\right) ,q^{n}\left( t\right) \right) \right] +\sqrt{%
\varepsilon _{n}}C_{1}\alpha ,
\end{eqnarray*}%
\begin{equation}
\tag{4.2.11}
\end{equation}%
and%
\begin{equation}
0\leq \mathbb{E}\left[ \int_{0}^{T}\left( \varphi \left( t\right) +G^{\ast
}\left( t\right) p^{n}\left( t\right) \right) \text{d}\left( \xi \left(
t\right) -\eta \left( t\right) \right) \right] +\sqrt{\varepsilon _{n}}%
C_{2}\alpha ,  \tag{4.2.12}
\end{equation}%
where $C_{i},$ $i=1,2$ are positive constants.
\end{theorem}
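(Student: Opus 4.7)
The plan is to glue together three ingredients that are already in place: the chattering/stability results (Lemmas 11--12), Ekeland's principle (Lemma 14) applied in the complete metric space $(\mathcal{U},d)$, and the strict-control maximum principle of Theorem 1 together with its proof via variational equations (3.1.4), (3.1.9) and the first-order adjoint (3.1.5). First I would fix an optimal relaxed-singular control $(\mu,\eta)\in\mathcal{R}_{1}\times\mathcal{U}_{2}$, invoke Lemma 11 to select $u^{n}$ with $\delta_{u^{n}(t)}\text{d}a\,\text{d}t\Rightarrow \mu(t,\text{d}a)\text{d}t$ $P$-a.s., and use Lemma 12 to obtain (4.2.3) and $J(u^{n},\eta)\to \mathcal{J}(\mu,\eta)=\inf J$; this produces $\varepsilon_{n}\downarrow 0$ realising (4.2.4), so each $(u^{n},\eta)$ is $\varepsilon_{n}$-optimal in the sense of Definition 4.

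Next I would apply Ekeland's principle with $\lambda=\sqrt{\varepsilon_{n}}$ to the continuous, bounded-below functional $J$ on $(\mathcal{U},d)$ to obtain $(u^{\sqrt{\varepsilon_{n}}},\eta)$ satisfying (4.2.5). I would then substitute the two admissible competitors (4.2.6) (spike variation on $[\tau,\tau+\alpha]$ by an $\mathcal{F}_{\tau}$-measurable $v$) and (4.2.7) (convex perturbation of the singular part by $\xi\in\mathcal{U}_{2}$) into the third line of (4.2.5). For (4.2.6), $d_{1}\leq \alpha$ and $d_{2}=0$; for (4.2.7), $d_{1}=0$ and $d_{2}\leq \alpha\,(\mathbb{E}[|\xi(T)-\eta(T)|^{2}])^{1/2}\leq \alpha\sqrt{M}$. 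This produces the near-variational inequalities (4.2.8)--(4.2.9) with explicit constants $C_{1}$, $C_{2}$ depending only on the compact set $U_{1}$ and on $M$.

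Having reduced the problem to two near-variational inequalities for the strict control $(u^{\sqrt{\varepsilon_{n}}},\eta)$, I would now run the analysis of Section 3 verbatim for this control. By Theorem 3.1 of Buckdahn--Li--Peng [11], the mean-field BSDE (4.2.10) with data driven by $X^{u^{n},\eta}$ admits a unique adapted solution $(p^{n},q^{n})$ in the required spaces. Expanding the left side of (4.2.8) by the spike-variation machinery of Lemma 1 and the duality computation that yields (3.1.6) (i.e. Itô's formula applied to $\langle y^{1},p^{n}\rangle$, exactly as in the proof of Theorem 1 referenced from [12]) converts $J(u^{\sqrt{\varepsilon_{n}},\alpha},\eta)-J(u^{\sqrt{\varepsilon_{n}}},\eta)$ into $\alpha\,\mathbb{E}[\mathcal{H}(t,X^{u^{n},\eta},v,p^{n},q^{n})-\mathcal{H}(t,X^{u^{n},\eta},u^{n},p^{n},q^{n})]+o(\alpha)$; dividing (4.2.8) by $\alpha$ and absorbing $o(\alpha)$ in the $\sqrt{\varepsilon_{n}}\,C_{1}\alpha$ term (possibly by increasing $C_{1}$) yields (4.2.11). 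Analogously, expanding (4.2.9) through the variational equation (3.1.9) and applying Itô's formula to $\langle y^{2},p^{n}\rangle$, exactly as in Lemma 5 and (3.1.10)--(3.1.11), converts the left side into $\alpha\,\mathbb{E}\!\int_{0}^{T}(\varphi(t)+G^{\ast}(t)p^{n}(t))\,\text{d}(\xi(t)-\eta(t))+o(\alpha)$, which gives (4.2.12).

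The main obstacle is bookkeeping rather than mathematics: one has to verify that the remainder terms produced by the spike and convex variations remain of order $o(\alpha)$ \emph{uniformly in $n$}, so that dividing by $\alpha$ and letting $\alpha\to 0$ is legal and leaves the $\sqrt{\varepsilon_{n}}\,C_{i}\alpha$ residual truly linear in $\alpha$ with $n$-independent constants $C_{i}$. This uniformity is where hypothesis (H4) (boundedness of $b$ and $h$) and the compactness of $U_{1}$ are used decisively: together with (H1)--(H3), they give $n$-uniform bounds on $\mathbb{E}[\sup_{t}|X^{u^{n},\eta}|^{2}]$, on $\mathbb{E}[\sup_{t}|p^{n}|^{2}+\int|q^{n}|^{2}\text{d}t]$, and on the Lipschitz constants appearing in Lemma 1 and Lemma 3, so that the $o(\alpha)$ bounds in the variational expansions do not depend on $n$. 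Once this uniformity is in hand, (4.2.11) and (4.2.12) follow directly.
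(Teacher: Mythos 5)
Your proposal follows exactly the paper's route: the chattering and stability lemmas produce the $\varepsilon_{n}$-optimal strict controls, Ekeland's principle with $\lambda=\sqrt{\varepsilon_{n}}$ yields (4.2.5), the spike and convex perturbations give (4.2.8)--(4.2.9), and the Section 3 variational/adjoint machinery (the ``same method as in [8, Theorem 3.6, Theorem 4.6]'' invoked by the paper) converts these into (4.2.11) and (4.2.12). The paper's own proof is a one-line citation of [8] at that last step, so your write-up is simply a more explicit version of the same argument, including the $n$-uniform $o(\alpha)$ bookkeeping that the paper leaves implicit.
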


\begin{proof}
From (4.2.8) and (4.2.9), using the same method as in [8, Theorem 3.6,
Theorem 4.6] , we obtain (4.2.10) and (4.2.11), respectively.
\end{proof}

\subsection{Necessary Optimality Conditions for Relaxed Singular Controls}

We have

\begin{theorem}[\textbf{Relaxed maximum principle in integral form}]
Assume that (H1), (H3) and (H4) hold. Let $\left( \mu \left( \cdot \right)
,\eta \left( \cdot \right) \right) $ be an optimal relaxed control
minimizing the cost $J$ over $\mathcal{R}_{1}\times \mathcal{U}_{2},$ and
let $X^{\mu ,\eta }\left( \cdot \right) $ be the corresponding optimal
trajectory. Then there exists a unique pair of adapted processes $\left(
p^{\mu ,\eta }\left( \cdot \right) ,q^{\mu ,\eta }\left( \cdot \right)
\right) $ 
\begin{equation*}
\mathbb{E}\left[ \sup\limits_{t\in \left[ 0,T\right] }\left\vert p^{\mu
,\eta }\left( t\right) \right\vert ^{2}\right] +\mathbb{E}\left[
\int_{0}^{T}\left\vert q^{\mu ,\eta }\left( t\right) \right\vert ^{2}\text{d}%
t\right] <+\infty ,
\end{equation*}%
which is the solution of the following mean-field BSDEs%
\begin{equation}
\left\{ 
\begin{array}{l}
\text{d}p^{\mu ,\eta }\left( t\right) =-\int_{U}b_{x}\left( t,X^{\mu ,\eta
}\left( t\right) ,\mathbb{E}\left[ X^{\mu ,\eta }\left( t\right) \right]
,a\right) \mu \left( t,\text{d}a\right) p^{\mu ,\eta }\left( t\right) \text{d%
}t \\ 
\qquad \qquad -\sigma _{x}\left( t,X^{\mu ,\eta }\left( t\right) ,\mathbb{E}%
\left[ X^{\mu ,\eta }\left( t\right) \right] \right) q^{\mu ,\eta }\left(
t\right) \text{d}t \\ 
\qquad \qquad -\int_{U}f_{x}\left( t,X^{\mu ,\eta }\left( t\right) ,\mathbb{E%
}\left[ X^{\mu ,\eta }\left( t\right) \right] ,a\right) \mu \left( t,\text{d}%
a\right) \text{d}t \\ 
\qquad \qquad -\mathbb{E}\left[ \int_{U}b_{y}\left( t,X^{\mu ,\eta }\left(
t\right) ,\mathbb{E}\left[ X^{\mu ,\eta }\left( t\right) \right] ,a\right)
\mu \left( t,\text{d}a\right) p^{\mu ,\eta }\left( t\right) \text{d}t\right]
\\ 
\qquad \qquad -\mathbb{E}\left[ \sigma _{y}\left( t,X^{\mu ,\eta }\left(
t\right) ,\mathbb{E}\left[ X^{\mu ,\eta }\left( t\right) \right] \right)
q^{\mu ,\eta }\left( t\right) \right] \text{d}t \\ 
\qquad \qquad -\mathbb{E}\left[ \int_{U}f_{y}\left( t,X^{\mu ,\eta }\left(
t\right) ,\mathbb{E}\left[ X^{\mu ,\eta }\left( t\right) \right] ,a\right)
\mu \left( t,\text{d}a\right) \text{d}t\right] +q^{\mu ,\eta }\left(
t\right) dW\left( t\right) \\ 
p^{\mu ,\eta }\left( T\right) =h_{x}\left( X^{\mu ,\eta }\left( T\right) ,%
\mathbb{E}\left[ X^{\mu ,\eta }\left( T\right) \right] \right) +\mathbb{E}%
\left[ h_{y}\left( X^{\mu ,\eta }\left( T\right) ,\mathbb{E}\left[ X^{\mu
,\eta }\left( T\right) \right] \right) \right] ,%
\end{array}%
\right.  \tag{4.3.1}
\end{equation}%
such that for all $\left( v\left( \cdot \right) ,\xi \left( \cdot \right)
\right) \in \mathcal{U}_{1}\times \mathcal{U}_{2}$, we have 
\begin{eqnarray*}
&&\mathbb{E}\left[ H\left( t,X^{\mu ,\eta }\left( t\right) ,v\left( t\right)
,\eta \left( t\right) ,p^{\mu ,\eta }\left( t\right) ,q^{\mu ,\eta }\left(
t\right) \right) \right. \\
&&-\left. H\left( t,X^{\mu ,\eta }\left( t\right) ,\mu \left( t\right) ,\eta
\left( t\right) ,p^{\mu ,\eta }\left( s\right) ,q^{\mu ,\eta }\left(
t\right) \right) \right] \\
&\geq &0.
\end{eqnarray*}%
\begin{equation}
\tag{4.3.2}
\end{equation}%
\begin{equation}
0\leq \mathbb{E}\left[ \int_{0}^{T}\left( \varphi \left( s\right) +G^{\ast
}\left( s\right) p^{\mu ,\eta }\left( s\right) \right) \text{d}\left( \eta
\left( s\right) -\xi \left( s\right) \right) \right] ,  \tag{4.3.3}
\end{equation}%
where 
\begin{eqnarray*}
&&H\left( t,X^{\mu ,\eta }\left( t\right) ,\mu \left( t\right) ,\eta \left(
t\right) ,p^{\mu ,\eta }\left( s\right) ,q^{\mu ,\eta }\left( t\right)
\right) \\
&=&\int_{U}H\left( t,X^{\mu ,\eta }\left( t\right) ,a,\eta \left( t\right)
,p^{\mu ,\eta }\left( s\right) ,q^{\mu ,\eta }\left( t\right) \right) \mu
\left( t,\text{d}a\right) .
\end{eqnarray*}
\end{theorem}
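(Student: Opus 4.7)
The plan is to obtain Theorem 3 from the near-optimal strict maximum principle (Theorem 2) by passing to the limit along the approximating sequence furnished by the Chattering Lemma and Ekeland's variational principle. Fix the optimal relaxed pair $(\mu,\eta)$ and the trajectory $X^{\mu,\eta}$. By Lemma 11, choose predictable $U$-valued controls $u^n$ with $\delta_{u^n(t)}(\mathrm{d}a)\mathrm{d}t\to \mu(t,\mathrm{d}a)\mathrm{d}t$ weakly, $P$-a.s., and invoke Lemma 12 to get $\mathbb{E}\sup_{t\in[0,T]}|X^{u^n,\eta}(t)-X^{\mu,\eta}(t)|^2\to 0$ together with $J(u^n,\eta)\to J(\mu,\eta)=\mathcal J(\mu,\eta)$. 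Since $(\mu,\eta)$ is optimal over $\mathcal R_1\times\mathcal U_2$ and the strict controls embed in $\mathcal R_1$, the value over $\mathcal U$ equals the value over $\mathcal R_1\times\mathcal U_2$, so $(u^n,\eta)$ is $\varepsilon_n$-optimal in $\mathcal U$ with $\varepsilon_n\downarrow 0$.

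Next, I apply Lemma 14 with $\lambda=\sqrt{\varepsilon_n}$ to produce $(u^{\sqrt{\varepsilon_n}},\eta)\in\mathcal U$ verifying (4.2.5); then Theorem 2 yields the adjoint pair $(p^n,q^n)$ solving the mean-field BSDE (4.2.10) together with the near-variational inequalities (4.2.11) and (4.2.12). At the same time, the distance bound $d((u^{\sqrt{\varepsilon_n}},\eta),(u^n,\eta))\le\sqrt{\varepsilon_n}$ implies $u^{\sqrt{\varepsilon_n}}\to u^n$ in $\tilde P$-measure, so the random-measure $\delta_{u^{\sqrt{\varepsilon_n}}(t)}(\mathrm{d}a)\mathrm{d}t$ still converges weakly to $\mu(t,\mathrm{d}a)\mathrm{d}t$ and, by Lemma 12 again, $X^{u^{\sqrt{\varepsilon_n}},\eta}\to X^{\mu,\eta}$ in $\mathbb{E}\sup_{t}|\cdot|^2$. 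This is the technical backbone: all spatial coefficients evaluated along $X^{u^{\sqrt{\varepsilon_n}},\eta}$ converge in $L^2$ to the corresponding coefficients along $X^{\mu,\eta}$.

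The key step is the stability of the adjoint processes: writing the BSDE (4.2.10) and the limiting BSDE (4.3.1) as mean-field linear BSDEs with driver depending on the trajectory and on $u^{\sqrt{\varepsilon_n}}$ (resp.\ $\mu$), I would subtract the two equations, apply It\^o's formula to $|p^n-p^{\mu,\eta}|^2$, use the boundedness of $b_x,\sigma_x,f_x$ (and the $y$-derivatives) coming from (H1), and control the ``driver difference'' term by splitting it as (i)\ a piece where only $X$ differs, handled by continuity of the derivatives in $(x,y)$ and the $L^2$-convergence of the states, plus (ii)\ a piece where only the control differs, which is of the form $\int_0^T\big(\varphi^n(t,a)\,\delta_{u^{\sqrt{\varepsilon_n}}(t)}(\mathrm{d}a)-\varphi^n(t,a)\,\mu(t,\mathrm{d}a)\big)\mathrm{d}t$ for bounded continuous (in $a$) test functions $\varphi^n$ converging in $L^2$; by Lemma 11 and dominated convergence this tends to $0$. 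Gronwall then gives $\mathbb{E}\sup_t|p^n-p^{\mu,\eta}|^2+\mathbb{E}\int_0^T|q^n-q^{\mu,\eta}|^2\mathrm{d}t\to 0$.

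Finally I pass to the limit in (4.2.11) and (4.2.12). For (4.2.12), the integrand $\varphi(t)+G^\ast(t)p^n(t)$ converges in $L^2(\mathrm{d}\tilde P)$ to $\varphi(t)+G^\ast(t)p^{\mu,\eta}(t)$, and $\mathrm{d}(\xi-\eta)$ is a fixed finite signed measure, so letting $n\to\infty$ and $\alpha\downarrow 0$ yields (4.3.3). For (4.3.2), I rewrite the left-hand side of (4.2.11) as the integral of $\mathcal H(t,X^{u^{\sqrt{\varepsilon_n}},\eta},v,p^n,q^n)-\int_U \mathcal H(t,X^{u^{\sqrt{\varepsilon_n}},\eta},a,p^n,q^n)\delta_{u^{\sqrt{\varepsilon_n}}(t)}(\mathrm{d}a)$, substitute the $L^2$ limits of $X$, $p^n$, $q^n$, and use the weak convergence $\delta_{u^{\sqrt{\varepsilon_n}}(t)}\mathrm{d}t\to\mu(t,\mathrm{d}a)\mathrm{d}t$ against the bounded continuous-in-$a$ Hamiltonian (continuity in $a$ comes from (H1); boundedness from (H4) together with (H1)), to recover exactly the difference appearing in (4.3.2). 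The main obstacle, as indicated, is ensuring the simultaneous passage to the limit of strongly convergent $(X,p,q)$ and only weakly (Young-measure) convergent controls inside the Hamiltonian; this is why (H4) and the continuity of the derivatives in $(x,y,u)$ in (H1) are both required here.
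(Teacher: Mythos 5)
Your proposal follows essentially the same route as the paper: approximate the optimal relaxed control by strict controls via the Chattering Lemma and the stability Lemma 12, apply Ekeland's principle to obtain the near-optimal strict maximum principle of Theorem 2, establish $L^{2}$-convergence of the adjoint pairs $\left( p^{n},q^{n}\right) \rightarrow \left( p^{\mu ,\eta },q^{\mu ,\eta }\right) $ by It\^{o}'s formula and Gronwall's inequality (this is exactly the content of the paper's Lemma giving (4.3.4)), and then pass to the limit in (4.2.11)--(4.2.12). Your additional observation that $d\left( u^{\sqrt{\varepsilon _{n}}},u^{n}\right) \leq \sqrt{\varepsilon _{n}}$ preserves the weak convergence of the associated Young measures is a detail the paper leaves implicit, but it does not change the argument.
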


To prove Theorem 2, we need the following lemma.

\begin{lemma}
Let $\left( p^{n}\left( \cdot \right) ,q^{n}\left( \cdot \right) \right) $
and $\left( p^{\mu ,\eta }\left( \cdot \right) ,q^{\mu ,\eta }\left( \cdot
\right) \right) $ be the solutions of (4.2.10) and (4.3.1), respectively.
Then we have%
\begin{equation}
\lim\limits_{n\rightarrow \infty }\left( \mathbb{E}\left[ \sup\limits_{t\in %
\left[ 0,T\right] }\left\vert p^{\mu ,\eta }\left( t\right) -p^{n}\left(
t\right) \right\vert ^{2}\right] +\mathbb{E}\left[ \int_{0}^{T}\left\vert
q^{\mu ,\eta }\left( t\right) -q^{n}\left( t\right) \right\vert ^{2}\text{d}t%
\right] \right) =0.  \tag{4.3.4}
\end{equation}
\end{lemma}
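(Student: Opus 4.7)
The plan is to view (4.3.4) as a stability estimate for a linear mean-field BSDE, transferring the $L^{2}$ convergence of the forward trajectories (Lemma 12) and the weak convergence of random measures furnished by the chattering lemma (Lemma 11) to the backward pairs. First I would set $\Delta p^{n}:=p^{\mu ,\eta }-p^{n}$ and $\Delta q^{n}:=q^{\mu ,\eta }-q^{n}$. Subtracting (4.2.10) from (4.3.1) and splitting each bilinear difference of the form
\begin{equation*}
\int _{U}b_{x}(t,X^{\mu ,\eta },\mathbb{E}[X^{\mu ,\eta }],a)\mu (t,\text{d}a)\,p^{\mu ,\eta }-b_{x}(t,X^{u^{n},\eta },\mathbb{E}[X^{u^{n},\eta }],u^{n})\,p^{n}
\end{equation*}
(and likewise for the $b_{y},\sigma _{x},\sigma _{y},f_{x},f_{y}$ pieces, with the expectation summands reduced by Fubini) into (bounded coefficient)$\,\cdot \Delta p^{n}\;+\;$(state-error bracket)$\,\cdot p^{\mu ,\eta }\;+\;$(measure-error bracket)$\,\cdot p^{\mu ,\eta }$, one obtains for $(\Delta p^{n},\Delta q^{n})$ a linear mean-field BSDE with bounded coefficients (independent of $n$ by (H1)), inhomogeneity $R^{n}=R^{n}_{\mathrm{st}}+R^{n}_{\mathrm{me}}$, and terminal value equal to the $h_{x},h_{y}$ increment from $X^{u^{n},\eta }(T)$ to $X^{\mu ,\eta }(T)$.

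The standard $L^{2}$ a priori estimate for such BSDEs (It\^{o} applied to $|\Delta p^{n}|^{2}$, Young's inequality to absorb the linear parts, and Burkholder-Davis-Gundy, as in Theorem 3.1 of [11]) yields
\begin{equation*}
\mathbb{E}\Big[\sup _{t\in [0,T]}|\Delta p^{n}(t)|^{2}\Big]+\mathbb{E}\Big[\int _{0}^{T}|\Delta q^{n}(t)|^{2}\,\text{d}t\Big]\leq K\,\mathbb{E}\big[|\Delta p^{n}(T)|^{2}\big]+K\,\mathbb{E}\Big[\int _{0}^{T}|R^{n}(t)|^{2}\,\text{d}t\Big],
\end{equation*}
with $K$ depending only on $T$ and the uniform bound on the coefficients, hence independent of $n$. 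The claim (4.3.4) therefore reduces to showing that the terminal error and the residual error vanish as $n\rightarrow +\infty $.

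The terminal error is immediate: by (H1) and (H4) the derivatives $h_{x},h_{y}$ are continuous and bounded, and Lemma 12 gives $L^{2}$-convergence $X^{u^{n},\eta }(T)\rightarrow X^{\mu ,\eta }(T)$, so bounded/dominated convergence yields $\mathbb{E}[|\Delta p^{n}(T)|^{2}]\rightarrow 0$. Each state-error summand in $R^{n}_{\mathrm{st}}$, typified by $[b_{x}(t,X^{\mu ,\eta },\mathbb{E}[X^{\mu ,\eta }],u^{n}(t))-b_{x}(t,X^{u^{n},\eta },\mathbb{E}[X^{u^{n},\eta }],u^{n}(t))]\cdot p^{\mu ,\eta }(t)$, has a bracket bounded by $2\Vert b_{x}\Vert _{\infty }$ that tends to zero in probability (by continuity of $b_{x}$ and Lemma 12), multiplied by $p^{\mu ,\eta }\in L^{2}(\text{d}t\otimes \text{d}P)$; the product is dominated by $2\Vert b_{x}\Vert _{\infty }|p^{\mu ,\eta }|\in L^{2}$, so dominated convergence disposes of it. The $b_{y},\sigma _{x},\sigma _{y},f_{x},f_{y}$ summands (with $q^{\mu ,\eta }$ or $1$ in place of $p^{\mu ,\eta }$) are treated identically.

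The hard part is the measure-error residual $R^{n}_{\mathrm{me}}$, whose prototypical summand is $\int _{U}b_{x}(t,X^{\mu ,\eta },\mathbb{E}[X^{\mu ,\eta }],a)[\mu (t,\text{d}a)-\delta _{u^{n}(t)}(\text{d}a)]\,p^{\mu ,\eta }(t)$, together with the $\sigma _{x},\sigma _{y}$ analogues paired against $q^{\mu ,\eta }$ and the $f_{x},f_{y}$ analogues. Lemma 11 only delivers weak convergence of the random measures $\delta _{u^{n}(t)}(\text{d}a)\text{d}t$ to $\mu (t,\text{d}a)\text{d}t$ on $[0,T]\times U$, not pointwise-in-$t$ convergence of the bracket, so the squared $L^{2}(\text{d}t\otimes \text{d}P)$ control demanded by the a priori bound does not come for free. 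My plan here mirrors the device of [8, Theorem 4.6]: exploit the uniform $L^{\infty }$ bound $2\Vert b_{x}\Vert _{\infty }$ on the bracket, so that the product is dominated by an $L^{2}$ function, and approximate $p^{\mu ,\eta }$ and $q^{\mu ,\eta }$ in $L^{2}(\text{d}t\otimes \text{d}P)$ by bounded simple adapted processes that are piecewise constant in $t$; for each such approximation, Lemma 11 applied to the resulting bounded integrand on $[0,T]\times U$ forces the pairing to zero $P$-a.s., and the approximation tail is absorbed by the uniform bracket bound together with the $L^{2}$-density. Passing to the limit via dominated convergence and closing with Gronwall's lemma then finishes the proof.
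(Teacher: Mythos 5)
Your plan follows essentially the same route as the paper's proof: subtract the two adjoint BSDEs, apply It\^{o}'s formula to $|p^{\mu ,\eta }-p^{n}|^{2}$, absorb the linear terms by Young's inequality, reduce the claim to the vanishing of the terminal and residual errors via Lemmas 11 and 12, and close with Gronwall. The only place you go beyond the paper is the measure-error residual: the paper simply asserts the pointwise-in-$t$ convergence (4.3.5) as ``easy to get'' from the chattering lemma, whereas you correctly observe that Lemma 11 only yields weak convergence of $\delta _{u^{n}(t)}(\mathrm{d}a)\,\mathrm{d}t$ on $[0,T]\times U$ and supply the simple-process approximation argument needed to make that step rigorous.
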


\begin{proof}
Set%
\begin{equation*}
\left\{ 
\begin{array}{l}
\mu ^{n}\left( t,\text{d}a\right) =\delta _{u^{n}\left( t\right) }\left( 
\text{d}a\right) , \\ 
b^{1,\mu }\left( t\right) =\int_{U}b_{x}\left( t,X^{\mu ,\eta }\left(
t\right) ,\mathbb{E}\left[ X^{\mu ,\eta }\left( t\right) \right] ,a\right)
\mu \left( t,\text{d}a\right) , \\ 
b^{2,\mu }\left( t\right) =\int_{U}b_{y}\left( t,X^{\mu ,\eta }\left(
t\right) ,\mathbb{E}\left[ X^{\mu ,\eta }\left( t\right) \right] ,a\right)
\mu \left( t,\text{d}a\right) , \\ 
b^{1,n}\left( t\right) =\int_{U}b_{x}\left( t,X^{u^{n},\eta }\left( t\right)
,\mathbb{E}\left[ X^{u^{n},\eta }\left( t\right) \right] ,a\right) \mu
^{n}\left( t,\text{d}a\right) , \\ 
b^{2,n}\left( t\right) =\int_{U}b_{y}\left( t,X^{u^{n},\eta }\left( t\right)
,\mathbb{E}\left[ X^{u^{n},\eta }\left( t\right) \right] ,a\right) \mu
^{n}\left( t,\text{d}a\right) , \\ 
\sigma ^{1,\mu }\left( t\right) =\sigma _{x}\left( t,X^{\mu ,\eta }\left(
t\right) ,\mathbb{E}\left[ X^{\mu ,\eta }\left( t\right) \right] \right) ,
\\ 
\sigma ^{2,\mu }\left( t\right) =\sigma _{y}\left( t,X^{\mu ,\eta }\left(
t\right) ,\mathbb{E}\left[ X^{\mu ,\eta }\left( t\right) \right] \right) ,
\\ 
\sigma ^{1,n}\left( t\right) =\sigma _{x}\left( t,X^{u^{n},\eta }\left(
t\right) ,\mathbb{E}\left[ X^{u^{n},\eta }\left( t\right) \right] \right) ,
\\ 
\sigma ^{2,n}\left( t\right) =\sigma _{y}\left( t,X^{u^{n},\eta }\left(
t\right) ,\mathbb{E}\left[ X^{u^{n},\eta }\left( t\right) \right] \right) ,
\\ 
f^{1,\mu }\left( t\right) =\int_{U}f_{x}\left( t,X^{\mu ,\eta }\left(
t\right) ,\mathbb{E}\left[ X^{\mu ,\eta }\left( t\right) \right] ,a\right)
\mu \left( t,\text{d}a\right) , \\ 
f^{2,\mu }\left( t\right) =\int_{U}f_{y}\left( t,X^{\mu ,\eta }\left(
t\right) ,\mathbb{E}\left[ X^{\mu ,\eta }\left( t\right) \right] ,a\right)
\mu \left( t,\text{d}a\right) , \\ 
f^{1,n}\left( t\right) =\int_{U}f_{x}\left( t,X^{u^{n},\eta }\left( t\right)
,\mathbb{E}\left[ X^{u^{n},\eta }\left( t\right) \right] ,a\right) \mu
^{n}\left( t,\text{d}a\right) , \\ 
f^{2,n}\left( t\right) =\int_{U}f_{y}\left( t,X^{u^{n},\eta }\left( t\right)
,\mathbb{E}\left[ X^{u^{n},\eta }\left( t\right) \right] ,a\right) \mu
^{n}\left( t,\text{d}a\right) ,%
\end{array}%
\right.
\end{equation*}%
Since $l_{x},l_{y},$ $l=b,\sigma ,h$ are bounded and continuous, from Lemma
11 and Lemma 12, it is easy to get%
\begin{equation}
\left\{ 
\begin{array}{c}
\lim\limits_{n\rightarrow \infty }\mathbb{E}\left[ \left\vert l^{1,\mu
}\left( t\right) -l^{1,n}\left( t\right) \right\vert ^{2}\right] =0, \\ 
\lim\limits_{n\rightarrow \infty }\mathbb{E}\left[ \left\vert l^{2,\mu
}\left( t\right) -l^{2,n}\left( t\right) \right\vert ^{2}\right] =0,%
\end{array}%
\right.  \tag{4.3.5}
\end{equation}%
where $l$ stands for $b,\sigma ,h,$ respectively.

To get (4.3.4), applying It\^{o}'s formula to $\left( p^{\mu ,\eta }\left(
t\right) -p^{n}\left( t\right) \right) ^{2}$ on $\left[ t,T\right] ,$ we have%
\begin{eqnarray*}
&&\mathbb{E}\left[ \left\vert p^{\mu ,\eta }\left( t\right) -p^{n}\left(
t\right) \right\vert ^{2}\right] +\mathbb{E}\int_{t}^{T}\left\vert q^{\mu
,\eta }\left( s\right) -q^{n}\left( s\right) \right\vert ^{2}\text{d}s \\
&=&\mathbb{E}\left[ \left\vert h_{x}\left( X^{\mu ,\eta }\left( T\right) ,%
\mathbb{E}\left[ X^{\mu ,\eta }\left( T\right) \right] \right) +\mathbb{E}%
\left[ h_{y}\left( X^{\mu ,\eta }\left( T\right) ,\mathbb{E}\left[ X^{\mu
,\eta }\left( T\right) \right] \right) \right] \right. \right.  \\
&&\left. \left. -h_{x}\left( X^{u^{n},\eta }\left( T\right) ,\mathbb{E}\left[
X^{u,\eta }\left( T\right) \right] \right) -\mathbb{E}\left[ h_{y}\left(
X^{u^{n},\eta }\left( T\right) ,\mathbb{E}\left[ X^{u^{n},\eta }\left(
T\right) \right] \right) \right] \right\vert ^{2}\right]  \\
&&+2\mathbb{E}\int_{t}^{T}\left( p^{\mu ,\eta }\left( s\right) -p^{n}\left(
s\right) \right) \left( \Pi ^{\mu ,\eta }\left( s\right) -\Pi ^{n}\left(
s\right) \right) \text{d}s,
\end{eqnarray*}%
where%
\begin{equation}
\left\{ 
\begin{array}{l}
\Pi ^{\mu ,\eta }\left( s\right) =b^{1,\mu }\left( s\right) p^{\mu ,\eta
}\left( s\right) +b^{2,\mu }\left( s\right) \mathbb{E}\left[ p^{\mu ,\eta
}\left( s\right) \right] +\sigma ^{1,\mu }\left( s\right) q^{\mu ,\eta
}\left( s\right)  \\ 
\qquad +\sigma ^{2,\mu }\left( s\right) \mathbb{E}\left[ q^{\mu ,\eta
}\left( s\right) \right] +h^{1,\mu }\left( t\right) +h^{2,\mu }\left(
t\right) , \\ 
\Pi ^{n}\left( s\right) =b^{1,n}\left( s\right) p^{n}\left( s\right)
+b^{2,n}\left( s\right) \mathbb{E}\left[ p^{n}\left( s\right) \right]
+\sigma ^{1,n}\left( s\right) q^{n}\left( s\right) +\sigma ^{2,n}\left(
s\right) \mathbb{E}\left[ q^{n}\left( s\right) \right]  \\ 
\qquad +h^{1,n}\left( s\right) +h^{2,n}\left( s\right) .%
\end{array}%
\right.   \tag{4.3.6}
\end{equation}%
Using the inequality $ab\leq \frac{\varepsilon }{2}a^{2}+\frac{1}{%
2\varepsilon }b^{2},$ we obtain%
\begin{eqnarray*}
&&\mathbb{E}\left[ \left\vert p^{\mu ,\eta }\left( t\right) -p^{n}\left(
t\right) \right\vert ^{2}\right] +\mathbb{E}\int_{t}^{T}\left\vert q^{\mu
,\eta }\left( s\right) -q^{n}\left( s\right) \right\vert ^{2}\text{d}s \\
&\leq &\mathbb{E}\left[ \left\vert h_{x}\left( X^{\mu ,\eta }\left( T\right)
,\mathbb{E}\left[ X^{\mu ,\eta }\left( T\right) \right] \right) +\mathbb{E}%
\left[ h_{y}\left( X^{\mu ,\eta }\left( T\right) ,\mathbb{E}\left[ X^{\mu
,\eta }\left( T\right) \right] \right) \right] \right. \right.  \\
&&\left. \left. -h_{x}\left( X^{u^{n},\eta }\left( T\right) ,\mathbb{E}\left[
X^{u,\eta }\left( T\right) \right] \right) -h_{y}\left( X^{u^{n},\eta
}\left( T\right) ,\mathbb{E}\left[ X^{u^{n},\eta }\left( T\right) \right]
\right) \right\vert ^{2}\right]  \\
&&+\frac{1}{\varepsilon }\mathbb{E}\int_{t}^{T}\left\vert p^{\mu ,\eta
}\left( s\right) -p^{n}\left( s\right) \right\vert ^{2}\text{d}s+\varepsilon 
\mathbb{E}\int_{t}^{T}\left\vert \Pi ^{\mu ,\eta }\left( s\right) -\Pi
^{n}\left( s\right) \right\vert ^{2}\text{d}s \\
&\leq &\left( \frac{1}{\varepsilon }+24M\varepsilon \right) \mathbb{E}%
\int_{t}^{T}\left\vert p^{\mu ,\eta }\left( s\right) -p^{n}\left( s\right)
\right\vert ^{2}\text{d}s \\
&&+24M\varepsilon \mathbb{E}\int_{t}^{T}\left\vert q^{\mu ,\eta }\left(
s\right) -q^{n}\left( s\right) \right\vert ^{2}\text{d}s+\varepsilon \Theta
^{n}\left( t\right) ,
\end{eqnarray*}%
where%
\begin{eqnarray*}
\Theta ^{n}\left( t\right)  &=&\left\{ \frac{1}{\varepsilon }\mathbb{E}\left[
\left\vert h_{x}\left( X^{\mu ,\eta }\left( T\right) ,\mathbb{E}\left[
X^{\mu ,\eta }\left( T\right) \right] \right) +\mathbb{E}\left[ h_{y}\left(
X^{\mu ,\eta }\left( T\right) ,\mathbb{E}\left[ X^{\mu ,\eta }\left(
T\right) \right] \right) \right] \right. \right. \right.  \\
&&\left. \left. \left. -h_{x}\left( X^{u^{n},\eta }\left( T\right) ,\mathbb{E%
}\left[ X^{u,\eta }\left( T\right) \right] \right) -h_{y}\left(
X^{u^{n},\eta }\left( T\right) ,\mathbb{E}\left[ X^{u^{n},\eta }\left(
T\right) \right] \right) \right\vert ^{2}\right] \right\}  \\
&&+12\mathbb{E}\int_{t}^{T}\left\vert \left( b^{1,\mu }\left( s\right)
-b^{1,n}\left( s\right) \right) p^{n}\left( s\right) \right\vert ^{2}\text{d}%
s \\
&&+12\mathbb{E}\int_{t}^{T}\left\vert \left( b^{2,\mu }\left( s\right)
-b^{2,n}\left( s\right) \right) p^{n}\left( s\right) \right\vert ^{2}\text{d}%
s \\
&&+12\mathbb{E}\int_{t}^{T}\left\vert \left( \sigma ^{1,\mu }\left( s\right)
-\sigma ^{1,n}\left( s\right) \right) q^{n}\left( s\right) \right\vert ^{2}%
\text{d}s \\
&&+12\mathbb{E}\int_{t}^{T}\left\vert \left( \sigma ^{2,\mu }\left( s\right)
-\sigma ^{2,n}\left( s\right) \right) q^{n}\left( s\right) \right\vert ^{2}%
\text{d}s \\
&&+6\left\vert h_{x}\left( X^{\mu ,\eta }\left( T\right) ,\mathbb{E}\left[
X^{\mu ,\eta }\left( T\right) \right] \right) --h_{x}\left( X^{u^{n},\eta
}\left( T\right) ,\mathbb{E}\left[ X^{u,\eta }\left( T\right) \right]
\right) \right\vert ^{2} \\
&&+6\left\vert \mathbb{E}h\left[ _{y}\left( X^{\mu ,\eta }\left( T\right) ,%
\mathbb{E}\left[ X^{\mu ,\eta }\left( T\right) \right] \right) -h_{y}\left(
X^{u^{n},\eta }\left( T\right) ,\mathbb{E}\left[ X^{u^{n},\eta }\left(
T\right) \right] \right) \right] \right\vert ^{2}.
\end{eqnarray*}%
Picking $\varepsilon =\frac{1}{48M},$ we have%
\begin{eqnarray*}
&&\mathbb{E}\left[ \left\vert p^{\mu ,\eta }\left( t\right) -p^{n}\left(
t\right) \right\vert ^{2}\right] +\frac{1}{2}\mathbb{E}\int_{t}^{T}\left%
\vert q^{\mu ,\eta }\left( s\right) -q^{n}\left( s\right) \right\vert ^{2}%
\text{d}s \\
&\leq &C\mathbb{E}\int_{t}^{T}\left\vert p^{\mu ,\eta }\left( s\right)
-p^{n}\left( s\right) \right\vert ^{2}\text{d}s+C\Theta ^{n}\left( t\right) ,
\end{eqnarray*}%
where $C>0$ depends on $M.$

We are going to show that 
\begin{equation}
\lim\limits_{n\rightarrow +\infty }\Theta ^{n}\left( t\right) =0. 
\tag{4.3.7}
\end{equation}%
Using Cauchy-Schwarz inequality we obtain%
\begin{eqnarray*}
&&\mathbb{E}\int_{t}^{T}\left\vert \left( b^{1,\mu }\left( s\right)
-b^{1,n}\left( s\right) \right) p^{n}\left( s\right) \right\vert \text{d}s \\
&\leq &\int_{t}^{T}\left( \mathbb{E}\left\vert b^{1,\mu }\left( s\right)
-b^{1,n}\left( s\right) \right\vert ^{2}\right) ^{\frac{1}{2}}\left( \mathbb{%
E}\left\vert p^{n}\left( s\right) \right\vert ^{2}\right) ^{\frac{1}{2}}.
\end{eqnarray*}%
From (4.3.5), it follows that 
\begin{equation*}
\mathbb{E}\int_{t}^{T}\left\vert \left( b^{1,\mu }\left( s\right)
-b^{1,n}\left( s\right) \right) p^{n}\left( s\right) \right\vert \text{d}%
s\rightarrow 0,\text{ as }n\rightarrow \infty .
\end{equation*}%
On the other hand, by (H1), It is easy to see that $\mathbb{E}\left\vert
p^{n}\left( s\right) \right\vert ^{2}<+\infty ,$ uniformly$.$ Then we have 
\begin{eqnarray*}
&&\mathbb{E}\int_{t}^{T}\left\vert \left( b^{1,\mu }\left( s\right)
-b^{1,n}\left( s\right) \right) p^{n}\left( s\right) \right\vert ^{2}\text{d}%
s \\
&\leq &M\mathbb{E}\left[ \sup\limits_{t\in \left[ 0,T\right] }\left\vert
p^{n}\left( s\right) \right\vert \int_{t}^{T}\left\vert \left( b^{1,\mu
}\left( s\right) -b^{1,n}\left( s\right) \right) p^{n}\left( s\right)
\right\vert \text{d}s\right] \rightarrow 0,\text{ as }n\rightarrow +\infty ,
\end{eqnarray*}%
\begin{equation}
\tag{4.3.8}
\end{equation}%
where $M$ depends Lipschitz constant of $b_{x}.$ We can also get 
\begin{equation}
\left\{ 
\begin{array}{c}
\lim\limits_{n\rightarrow +\infty }\mathbb{E}\int_{t}^{T}\left\vert \left(
b^{2,\mu }\left( s\right) -b^{2,n}\left( s\right) \right) p^{n}\left(
s\right) \right\vert ^{2}\text{d}s=0, \\ 
\lim\limits_{n\rightarrow +\infty }\mathbb{E}\int_{t}^{T}\left\vert \left(
\sigma ^{1,\mu }\left( s\right) -\sigma ^{1,n}\left( s\right) \right)
q^{n}\left( s\right) \right\vert ^{2}\text{d}s=0, \\ 
\lim\limits_{n\rightarrow +\infty }\mathbb{E}\int_{t}^{T}\left\vert \left(
\sigma ^{2,\mu }\left( s\right) -\sigma ^{2,n}\left( s\right) \right)
q^{n}\left( s\right) \right\vert ^{2}\text{d}s=0,%
\end{array}%
\right.   \tag{4.3.9}
\end{equation}%
At last, since $h_{x},h_{y}$ are continuous and bounded, we have%
\begin{eqnarray*}
&&\lim\limits_{n\rightarrow +\infty }\mathbb{E}\left[ \left\vert h_{x}\left(
X^{\mu ,\eta }\left( T\right) ,\mathbb{E}\left[ X^{\mu ,\eta }\left(
T\right) \right] \right) +\mathbb{E}\left[ h_{y}\left( X^{\mu ,\eta }\left(
T\right) ,\mathbb{E}\left[ X^{\mu ,\eta }\left( T\right) \right] \right) %
\right] \right. \right.  \\
&&\left. \left. -h_{x}\left( X^{u^{n},\eta }\left( T\right) ,\mathbb{E}\left[
X^{u,\eta }\left( T\right) \right] \right) -\mathbb{E}\left[ h_{y}\left(
X^{u^{n},\eta }\left( T\right) ,\mathbb{E}\left[ X^{u^{n},\eta }\left(
T\right) \right] \right) \right] \right\vert ^{2}\right]  \\
&=&0.
\end{eqnarray*}%
\begin{equation}
\tag{4.3.10}
\end{equation}%
From (4.3.8)-(4.3.10) we claim that (4.3.7) holds. Applying Gronwall's
inequality, we get the desired result (4,3,4).
\end{proof}

\begin{proof}
Proof of Theorem 2. Suppose that $\left( \mu \left( \cdot \right) ,\eta
\left( \cdot \right) \right) $ is the optimal relaxed control. Then from
Theorem 1, we know also that there exists a sequence $\left( u^{n}\left(
\cdot \right) ,\eta \left( \cdot \right) \right) _{n\geq 1}$ converge to the
relaxed counterpart as $n\rightarrow +\infty ,$ such that (4.2.11), (4.2.12)
hold for all $\left( v\left( \cdot \right) ,\xi \left( \cdot \right) \right) 
$ in $\mathcal{U}_{1}\times \mathcal{U}_{2}.$ Letting $n$ tend to infinite
and using Lemma 9, we get the desired result.
\end{proof}

\begin{theorem}
Assume that (H1), (H3) and (H4) hold. Let $\left( \mu \left( \cdot \right)
,\eta \left( \cdot \right) \right) $ be an optimal relaxed control
minimizing the cost $J$ over $\mathcal{R}_{1}\times \mathcal{U}_{2},$ and
let $X^{\mu ,\eta }\left( \cdot \right) $ be the corresponding optimal
trajectory. Then there exists a unique pair of adapted processes $\left(
p^{\mu ,\eta }\left( \cdot \right) ,q^{\mu ,\eta }\left( \cdot \right)
\right) $ of BSDE (4.3.1), such that for all $\left( v\left( \cdot \right)
,\xi \left( \cdot \right) \right) \in \mathcal{U}_{1}\times \mathcal{U}_{2}$%
, we have 
\begin{eqnarray*}
&&H\left( t,X^{\mu ,\eta }\left( t\right) ,\mu \left( t\right) ,\eta \left(
t\right) ,p^{\mu ,\eta }\left( s\right) ,q^{\mu ,\eta }\left( t\right)
\right)  \\
&=&\min\limits_{v\in U_{1}}H\left( t,X^{\mu ,\eta }\left( t\right) ,v,\eta
\left( t\right) ,p^{\mu ,\eta }\left( t\right) ,q^{\mu ,\eta }\left(
t\right) \right) ,\text{ }P\text{-a.s., d}t\text{-a.e.}
\end{eqnarray*}%
\begin{equation}
\tag{4.3.11}
\end{equation}%
\begin{equation}
P\left\{ \varphi _{i}\left( t\right) +G_{i}\left( t\right) p^{\mu ,\eta
}\left( t\right) \geq 0\right\} =1,  \tag{4.3.12}
\end{equation}%
\begin{equation}
P\left\{ \sum_{i=1}^{m}\mathbf{I}_{\varphi _{i}\left( t\right) +G_{i}\left(
t\right) p^{\mu ,\eta }\left( t\right) d\eta _{i}\left( t\right) \geq
0}=0\right\} =1.  \tag{4.3.13}
\end{equation}
\end{theorem}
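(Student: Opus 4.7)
My plan is to pass from the integral-form relaxed maximum principle of Theorem 2 to its pointwise form (4.3.11)--(4.3.13). Since the adjoint pair $(p^{\mu,\eta}, q^{\mu,\eta})$ has already been constructed in Theorem 2, and since the integral inequalities (4.3.2)--(4.3.3) hold for every competitor $(v(\cdot), \xi(\cdot)) \in \mathcal{U}_1 \times \mathcal{U}_2$, the remaining work is a measure-theoretic localization in the time variable and on $\Omega$.

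For (4.3.11) I would fix a deterministic $v_0 \in U_1$, a time $\tau \in [0,T)$, and an event $A \in \mathcal{F}_\tau$. I would perturb the nearly strict approximation $u^n$ furnished by Lemma 12 into $v^n_\beta = v_0 \mathbf{1}_{A \times [\tau, \tau+\beta]} + u^n \mathbf{1}_{\text{elsewhere}}$, substitute it into (4.2.11), and let $n \to +\infty$ using the stability estimate (4.3.4) of Lemma 9 together with the chattering convergence $\delta_{u^n(t)}\,da\,dt \rightharpoonup \mu(t,da)\,dt$. This produces the integrated inequality
\[
\mathbb{E}\int_\tau^{\tau+\beta} \mathbf{1}_A\bigl[H(s, X^{\mu,\eta}(s), v_0, \eta(s), p^{\mu,\eta}(s), q^{\mu,\eta}(s)) - H(s, X^{\mu,\eta}(s), \mu(s), \eta(s), p^{\mu,\eta}(s), q^{\mu,\eta}(s))\bigr]\,ds \geq 0.
\]
Dividing by $\beta$ and passing to the limit at a Lebesgue point yields the corresponding pointwise inequality for a.e.\ $\tau$; the arbitrariness of $A \in \mathcal{F}_\tau$ then removes the indicator, and exhausting $v_0$ over a countable dense subset of the compact $U_1$ (combined with the continuity and boundedness of $H$ in its control argument from (H1) and (H4)) produces (4.3.11).

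For the singular-control conditions (4.3.12)--(4.3.13), I would mimic the argument of [8, Theorem 3.7] that already underlies the strict statement (3.1.7)--(3.1.8) proved in Theorem 1. For (4.3.12), choose $\xi(t) = \eta(t) + \int_0^t \mathbf{1}_{\{\varphi_i(s) + G_i(s) p^{\mu,\eta}(s) < 0\}}\,e_i\,ds$ as a test perturbation in (4.3.3); the resulting inequality forces the $P \otimes dt$-measure of the event to vanish, which gives the coordinatewise positivity. For (4.3.13), I would test (4.3.3) with $\xi = (1+\varepsilon)\eta$ and $\xi = (1-\varepsilon)\eta$ successively; the two inequalities combine to
\[
\mathbb{E}\int_0^T (\varphi(s) + G^{*}(s)\,p^{\mu,\eta}(s))\,d\eta(s) = 0,
\]
which, together with (4.3.12), forces the complementarity statement (4.3.13).

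The main obstacle, in my view, is the localization step underlying (4.3.11). The integral inequality (4.3.2) compares the Hamiltonian at a strict $v(t) \in U_1$ with the relaxed Hamiltonian at $\mu(t) \in P(U_1)$, and there is no admissible \emph{strict} control that coincides with $\mu$ outside the spike, so the spike variation must actually be carried out at the approximating strict $u^n$ and then transported to the limit. Controlling this transport requires (i) the adjoint stability provided by Lemma 9, (ii) the weak convergence of $\delta_{u^n(\cdot)}\,da\,dt$ to $\mu(\cdot,da)\,dt$ from the chattering Lemma 11, and (iii) the uniform boundedness assumed in (H4) so that the dominated convergence theorem can be applied inside the expectation. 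Once these three ingredients are in place, both (4.3.11) and (4.3.12)--(4.3.13) follow by the standard arguments described above.
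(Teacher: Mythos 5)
Your proposal is correct and follows the same overall architecture as the paper, whose proof of this theorem consists entirely of two citations: (4.3.11) ``can be derived from (4.3.2)'' (the integral-form relaxed maximum principle containing (4.3.1)--(4.3.3)), and (4.3.12)--(4.3.13) ``are proved exactly as in Theorem 3.7 in [8]''. Your treatment of the singular conditions is precisely the [8]-style argument the paper invokes: the perturbation $\xi(t)=\eta(t)+\int_0^t\mathbf{1}_{\{\varphi_i(s)+G_i(s)p^{\mu,\eta}(s)<0\}}e_i\,\mathrm{d}s$ for (4.3.12) and $\xi=(1\pm\varepsilon)\eta$ for (4.3.13) are the standard test directions (note only that the paper's (4.3.3) carries $\mathrm{d}(\eta-\xi)$ where (4.2.12), of which it is the limit, has $\mathrm{d}(\xi-\eta)$; you correctly work with the latter sign, without which the inequality would be vacuous). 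Where you genuinely diverge is (4.3.11): you return to the approximate inequality (4.2.11), build the localization $v_0\mathbf{1}_{A\times[\tau,\tau+\beta]}$ into the strict approximants $u^n$, and re-run the limit $n\to\infty$ with Lemma 9, the chattering convergence, and a Lebesgue-point argument, because you (correctly) observe that no strict control coincides with $\mu$ off the spike. The derivation the paper intends from (4.3.2) sidesteps this obstacle entirely: for fixed $t$ let $v^{\epsilon}$ be a measurable $\epsilon$-minimizing selection of $a\mapsto H(t,X^{\mu,\eta}(t),a,\eta(t),p^{\mu,\eta}(t),q^{\mu,\eta}(t))$; since $H$ at $\mu(t)$ equals $\int_U H(\cdot,a,\cdot)\,\mu(t,\mathrm{d}a)\geq\min_{a\in U_1}H(\cdot,a,\cdot)$ pointwise, the random variable $\min_{a}H(\cdot,a,\cdot)-H(\cdot,\mu(t),\cdot)$ is nonpositive, while (4.3.2) applied to $v^{\epsilon}$ shows its expectation is at least $-\epsilon$ for every $\epsilon>0$; hence it vanishes $P$-a.s., which is (4.3.11). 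Both routes are valid: yours is longer but self-contained (and re-proves the integral form in localized guise), while the selection argument explains why the paper can dispose of (4.3.11) in one line once (4.3.2) is available.
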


\begin{proof}
(4.3.11) can be derived from (4.3.2). The assertions (4.3.12) and (4.3.13)
are proved exactly as in Theorem 3.7 in [8].
\end{proof}

\begin{corollary}
Under the same assumptions in Theorem 4, we have%
\begin{eqnarray*}
&&H\left( t,X^{\mu ,\eta }\left( t\right) ,\mu \left( t\right) ,\eta \left(
t\right) ,p^{\mu ,\eta }\left( s\right) ,q^{\mu ,\eta }\left( t\right)
\right)  \\
&=&\min\limits_{\varsigma \in P\left( U_{1}\right) }H\left( t,X^{\mu ,\eta
}\left( t\right) ,\varsigma ,\eta \left( t\right) ,p^{\mu ,\eta }\left(
t\right) ,q^{\mu ,\eta }\left( t\right) \right) ,\text{ }P\text{-a.s., d}t%
\text{-a.e.}
\end{eqnarray*}%
\begin{equation}
\tag{4.3.14}
\end{equation}%
\begin{equation*}
P\left\{ \varphi _{i}\left( t\right) +G_{i}\left( t\right) p^{\mu ,\eta
}\left( t\right) \geq 0\right\} =1,
\end{equation*}%
\begin{equation*}
P\left\{ \sum_{i=1}^{m}\mathbf{I}_{\varphi _{i}\left( t\right) +G_{i}\left(
t\right) p^{\mu ,\eta }\left( t\right) d\eta _{i}\left( t\right) \geq
0}=0\right\} =1.
\end{equation*}
\end{corollary}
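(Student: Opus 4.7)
The plan is to deduce the corollary directly from Theorem 4 by exploiting the fact that, under the relaxed formulation, the Hamiltonian is affine (in fact linear) in the measure-valued control argument. Since
\begin{equation*}
H\bigl(t,X^{\mu,\eta}(t),\varsigma,\eta(t),p^{\mu,\eta}(t),q^{\mu,\eta}(t)\bigr)=\int_{U}H\bigl(t,X^{\mu,\eta}(t),a,\eta(t),p^{\mu,\eta}(t),q^{\mu,\eta}(t)\bigr)\,\varsigma(\mathrm{d}a),
\end{equation*}
the minimisation of a linear functional over the convex set $P(U_{1})$ is achieved at its extreme points, namely the Dirac masses $\delta_{v}$, $v\in U_{1}$. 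This reduces the minimisation over $P(U_{1})$ to the one already obtained in Theorem 4.

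Concretely, first I would fix $\omega$ (outside the null set given by Theorem 4) and $t$ (outside the Lebesgue-null set), so that
\begin{equation*}
H\bigl(t,X^{\mu,\eta}(t),\mu(t),\eta(t),p^{\mu,\eta}(t),q^{\mu,\eta}(t)\bigr)\le H\bigl(t,X^{\mu,\eta}(t),v,\eta(t),p^{\mu,\eta}(t),q^{\mu,\eta}(t)\bigr)
\end{equation*}
holds for every $v\in U_{1}$. Then for an arbitrary $\varsigma\in P(U_{1})$ I would integrate this pointwise inequality against $\varsigma(\mathrm{d}v)$ over the compact set $U_{1}$; the continuity and boundedness of $b,\sigma,f$ in $a$ under (H1), (H4) guarantee that $v\mapsto H(t,X^{\mu,\eta}(t),v,\eta(t),p^{\mu,\eta}(t),q^{\mu,\eta}(t))$ is continuous and bounded, so the integral is well-defined and the inequality is preserved. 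The right-hand side is precisely $H(t,X^{\mu,\eta}(t),\varsigma,\eta(t),p^{\mu,\eta}(t),q^{\mu,\eta}(t))$ by the linearity displayed above. Taking the infimum over $\varsigma\in P(U_{1})$ yields the reverse inequality (and the minimum is attained at $\varsigma=\mu(t)$), giving (4.3.14).

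For the singular part, the two relations
\begin{equation*}
P\{\varphi_{i}(t)+G_{i}(t)p^{\mu,\eta}(t)\ge 0\}=1,\qquad P\Bigl\{\sum_{i=1}^{m}\mathbf{I}_{\varphi_{i}(t)+G_{i}(t)p^{\mu,\eta}(t)\,\mathrm{d}\eta_{i}(t)\ge 0}=0\Bigr\}=1,
\end{equation*}
are literally conditions (4.3.12), (4.3.13) of Theorem 4 and require no further argument; they depend only on the adjoint process $p^{\mu,\eta}$ and on $\eta$, not on the control variable being extended from $U_{1}$ to $P(U_{1})$.

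There is really no substantive obstacle here: the only point worth noting is a measurability/null-set bookkeeping issue, namely that the exceptional null set in $[0,T]\times\Omega$ on which the pointwise $H$-inequality of Theorem 4 could fail must be chosen uniformly in $v\in U_{1}$. This is handled by the compactness of $U_{1}$ together with the continuity of $H$ in $v$: a countable dense subset of $U_{1}$ suffices, and continuity transfers the inequality to all $v\in U_{1}$ off a single null set, after which Fubini-type integration against $\varsigma$ gives the relaxed inequality.
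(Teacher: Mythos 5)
Your proof is correct and follows essentially the same route as the paper, which simply defers to Corollary 4.8 of [8]: that argument is precisely the observation that the relaxed Hamiltonian is linear in the measure variable, so minimising over $P(U_{1})$ reduces to the pointwise minimisation over $U_{1}$ already given by (4.3.11), while the singular conditions are carried over verbatim from Theorem 4. You have merely written out the details the paper leaves to the reference.
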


\begin{proof}
(4.3.14) can be proved the same as Corollary 4.8 in [8].
\end{proof}

\begin{remark}
Taking $\mu \left( t,\text{d}a\right) =\delta _{u\left( t\right) }\left( 
\text{d}a\right) $, we recover Theorem 1.
\end{remark}

\begin{remark}
As you have observed that, in our paper, the control variable does not enter
the diffusion term. In fact, for the classical case, that is, both drift and
diffusion terms containing control variables, the similar optimal control
problem has been studied by Andersson, in [2]. As for mean-field case, we
will investigate it in our future work.
\end{remark}


\begin{thebibliography}{10}
\bibitem[1]{AD} Ahmed, N.U., Ding, X.: Controlled McKean-Vlasov equations.
Commun. Appl. Anal. 5(2), 183--206, (2001).

\bibitem[2]{A} Andersson, D.: The relaxed general maximum principle for
singular optimal control of diffusions. Syst. Control Lett. 58 76-82, (2009).

\bibitem[3]{AD} Andersson, D., Djehiche, B.: A maximum principle for SDE's
of mean-field type. Appl.Math. Optim. 63(3), 341--356, (2010).

\bibitem[4]{B} Bensoussan, A.: Lectures on stochastic control. In: Lecture
Notes in Mathematics, vol. 972, pp. 1--62. Springer, Berlin (1981).

\bibitem[5]{B} Bismut, J.M.: An introductory approach to duality in optimal
stochastic control. SIAM Rev. 20, 62--78 (1978).

\bibitem[6]{BS} V. E. Ben\v{e}s, L. A.: Shepp, and H. S. Witsenhausen, Some
solvable stochastic control problems, Stochastics, 4, 39--83, (1980).

\bibitem[7]{BDM} Bahlali, S., Mezerdi, B., Djehiche, B.: Approximation and
optimality necessary conditions in relaxed stochastic control problems, J.
Appl. Math. Stoch. Anal., (5), 1--23, (2006).

\bibitem[8]{BDM} Bahlali, S., Mezerdi, B., Djehiche, B.: The relaxed
stochastic maximum principle in singular control of diffusions, SIAM J.
Control Optim., 46, 427--444, (2007).

\bibitem[9]{BM} Bahlali, S., Mezerdi, B.,: A general stochastic maximum
principle for singular control problems, Electron J. Probab. 10 Paper no 30.
988-1004, (2005)

\bibitem[10]{BDLP} Buckdahn, R., Djehiche, B., Li, J., Peng, S.: Mean-field
backward stochastic differential equations. A limit approach. Ann. Probab.
37(4), 1524--1565, (2009).

\bibitem[11]{BLP} Buckdahn, R., Li, J., Peng, S.: Mean-field backward
stochastic differential equations and related partial differential
equations. Stoch. Process. Appl. 119(10), 3133--3154, (2009).

\bibitem[12]{BDL} Buckdahn, R., Djehiche, B., Li, J.: A General Stochastic
Maximum Principle for SDEs of Mean-field Type. Appl.Math. Optim.
64:197--216, (2011).

\bibitem[13]{CMR} Chow, P.-L. Menaldi, J.-L., Robin, M.: Additive control of
stochastic linear systems with finite horizon, SIAM J. Control Optim., 23,
858--899, (1985).

\bibitem[14]{CH} Cadenillas, A. Haussmann, U.G.: The stochastic maximum
principle for a singular control problem, Stoch. Stoch. Rep. 49, 211 237,
(1994).

\bibitem[15]{CK} Cadenillas, A., Karatzas, I.: The stochastic maximum
principle for linear, convex optimal control with random coefficients. SIAM
J. Control Optim. 33(2), 590--624, (1995).

\bibitem[16]{DN} Davis, M. H. A., Norman, A.: Portfolio selection with
transaction costs, Math. Oper. Res., 15, 676--713, (1990).

\bibitem[17]{E} Ekeland, I.: On the variational principle, J. Math. Anal.
Appl. 47, 324-353, (1974).

\bibitem[18]{E} Elliott, R.J.: The optimal control of diffusions. Appl.
Math. Optim. 22, 229--240, (1990).

\bibitem[19]{H} Haussmann, U.G.: A Stochastic Maximum Principle for Optimal
Control of Diffusions. Longman Scientific and Technical, Essex, (1986).

\bibitem[20]{HS} Haussmann, U.G., Suo, W.: Existence of singular optimal
control laws for stochastic differential equations, Stoch. Stoch. Rep., 48,
249--272, (1994).

\bibitem[21]{HS} Haussmann, U.G., Suo, W.: Singular optimal stochastic
controls I: Existence, SIAM J. Control Optim. 33 (3) 916-936, (1995)

\bibitem[22]{HS} Haussmann, U.G., Suo, W.: Singular optimal stochastic
controls II: Dynamic programming, SIAM J. Control Optim., 33. 937--959,
(1995)

\bibitem[23]{JMW} Jourdain, B., M\'{e}l\'{e}ard, S., Woyczynski, W.:
Nonlinear SDEs driven by L\'{e}vy processes and related PDEs. Alea 4, 1--29,
(2008).

\bibitem[24]{K} Kushner, H.J.: On the stochastic maximum principle: fixed
time of control. J. Math. Anal. Appl. 11, 78--92, (1965).

\bibitem[25]{KS} Karatzas, I., Shreve, S. E.: Connections between optimal
stopping and stochastic control I: Monotone follower problems, SIAM J.
Control Optim., 22, 856--877, (1984)

\bibitem[26]{LL} Lasry, J.M., Lions, P.L.: Mean field games. Jpn. J. Math.
2, 229--260, (2007).

\bibitem[27]{L} Li, J.: Stochastic maximum principle in the mean-field
controls. Automatica 48, 366-373, (2012).

\bibitem[28]{MTZ} Meyer-Brandis, T., \O ksendal, B., Zhou, X.Y.: A
mean-field stochastic maximum principle via Malliavin calculus. Stochastics:
An International Journal of Probability and Stochastic Processes, 1--24,
(2012).

\bibitem[29]{PP} Pardoux, E., Peng, S.: Adapted solution of a backward
stochastic differential equation. Syst. Control Lett. 14(1--2), 61--74,
(1990).

\bibitem[30]{P} Peng, S.: A general stochastic maximum principle for optimal
control problems. SIAM J. Control Optim. 2(4), 966--979, (1990).

\bibitem[31]{S} Sznitman, A.S.: Topics in propagation of chaos. In: Ec\^{o}%
le de Probabilites de Saint Flour, XIX-1989. Lecture Notes in Math, vol.
1464, pp. 165--251. Springer, Berlin (1989).

\bibitem[32]{Z} Zhou, X.: Stochasticnear-optimal controls: necessary and
sufficient conditions for near-optimality. SIAM J. Control Optim. Vol. 36,
No. 3, 929-947, 1998.
\end{thebibliography}
\end{document}